\newtheorem{theorem}{Theorem}[section]
\newtheorem{corollary}[theorem]{Corollary}
\newtheorem{proposition}[theorem]{Proposition}
\newtheorem{lemma}[theorem]{Lemma}
\theoremstyle{definition}
\newtheorem{definition}[theorem]{Definition}
\begin{document}
	
\title{On the Laplacian spectrum of $k$-symmetric graphs}
	
\author[S. Moon]{Sunyo Moon}
\address{Research Institute for Natural Sciences, Hanyang University, Seoul 04763, Korea}
\email{symoon89@hanyang.ac.kr}

\author[H. Yoo]{Hyungkee Yoo}
\address{Institute of Mathematical Sciences, Ewha Womans University, Seoul 03760, Korea}
\email{hyungkee@ewha.ac.kr}

\keywords{Laplacian eigenvalue,
$k$-symmetric graph,
$k$-symmetric join}

\subjclass[2020]{15A18, 05C50}

\thanks{The second author(Hyungkee Yoo) was supported by the National Research Foundation of Korea(NRF) grant funded by the Korea government Ministry of Education(NRF-2019R1A6A1A11051177) and Ministry of Science and ICT(NRF-2022R1A2C1003203).}

\begin{abstract}
For some positive integer $k$,
if the finite cyclic group $\mathbb{Z}_k$ can act freely on a graph $G$,
then we say that $G$ is $k$-symmetric.
In 1985, Faria showed that the multiplicity of Laplacian eigenvalue 1 is greater than or equal to the difference between the number of pendant vertices and the number of quasi-pendant vertices.
But if a graph has a pendant vertex, then it is at most 1-connected.
In this paper, we investigate a class of 2-connected $k$-symmetric graphs with a Laplacian eigenvalue 1.
We also identify a class of $k$-symmetric graphs in which all Laplacian eigenvalues are integers.
\end{abstract}

\maketitle

\section{Introduction} \label{sec:intro}
A {\it simple graph} $G=(V,E)$ is a combinatorial object consisting of a finite set $V$ and a set $E$ of unordered pairs of different elements of $V$.
The elements of $V$ and $E$ are called the {\it vertices} and the {\it edges} of the graph $G$, respectively.
For a given graph $G$,
the vertex set and the edge set of $G$ are denoted by $V(G)$ and $E(G)$, respectively.

Let $G$ be a graph with enumerated vertices.
The {\it Laplacian matrix} $L(G)$ of $G$ is defined as $L(G)=D(G)-A(G)$, where $D(G)$ is the diagonal matrix of vertex degrees and $A(G)$ is the adjacency matrix of $G$.
Thus the Laplacian matrix is symmetric.
Note that the Laplacian matrix can be considered a positive-semidefinite quadratic form on the Hilbert space generated by $V(G)$.
Since the Laplacian matrix contains information on the structure of the graph, it has been studied importantly in various applied fields including artificial neural network research using graph shaped data~\cite{Kipf1,Kipf2}.

Let $G$ be a graph with $n$ vertices.
For a square matrix $M$, we denote the characteristic polynomial of $M$ by $\mu(M,x)$. 
A root of the characteristic polynomial of Laplacian matrix $L(G)$ is called a {\it Laplacian eigenvalue} of $G$.
Denote the all eigenvalues of $L(G)$ by
$\lambda_{n}(G) \le \lambda_{n-1}(G) \le \cdots \le \lambda_1(G)$.
It is well-known that $\lambda_n(G)=0$ and $\lambda_1(G) \le n$.
The multiset of Laplacian eigenvalues of $G$ is called the {\it Laplacian spectrum} of $G$.
The Laplacian spectrum of the complement graph $\overline{G}$ of $G$ is satisfying
$$
0=\lambda_{n}(\overline{G}) \le n-\lambda_1(G) \le \cdots \le n-\lambda_{n-1}(G).
$$
The Laplacian spectrum shows us several properties of the graph.
For instance, Kirchhoff \cite{Kirchhoff} proved that the number of spanning tree of a connected graph $G$ with $n$ vertices is $\frac{1}{n}\lambda_1(G)\cdots\lambda_{n-1}(G)$.
Let $m_G(\lambda)$ denote the multiplicity of $\lambda$ as a Laplacian eigenvalue of $G$.
Note that the multiplcity of 0 is equal to the number of connected components of $G$.

The {\it connectivity} $\kappa(G)$ of a graph $G$ is the minimum number of vertices whose removal results in a disconnected or trivial graph. A graph $G$ is said to be {\it $t$-connected} if $\kappa(G) \geq t$.
If a graph is $t$-connected, then it is $(t\!-\!1)$-connected.
Fiedler~\cite{Fiedler} proved that the second smallest Laplacian eigenvalue of $G$ is less than or equal to $\kappa(G)$.

A {\it pendant vertex} of $G$ is a vertex of degree $1$.
A {\it quasi-pendant} of $G$ is a vertex adjacent to a pendant.
We denote the number of pendants of $G$ by $p(G)$,
and the number of quasi-pendant vertices by $q(G)$.
In \cite{Faria}, Faria showed that for any graph $G$, 
$$
m_G(1)\geq p(G)-q(G).
$$
It implies that if $p(G)$ is greater than $q(G)$, then $G$ has a Laplacian eigenvalue~1.
Also, such graph $G$ is at most 1-connected.
In \cite{Barik}, Barik et al. found trees with a Laplacian eigenvalue 1 even though the right-hand side of the above inequality is 0.
Since a tree has connectivity 1, we focus on 2-connected graph with a Laplacian eigenvalue 1.

The simplest way to obtain a 2-connected graph with a Laplacian eigenvalue 1 is the Cartesian product.
The {\it Cartesian product} $G \square H$ of graphs $G$ and $H$ is the graph with the vertex set $V(G) \times V(H)$
such that two vertices $(v,v')$ and $(w,w')$ are adjacent if $v=w$ and $v'$ is adjacent to $w'$ in $H$,
or if $v'=w'$ and $v$ is adjacent to $w$ in $G$.
Fiedler~\cite{Fiedler} showed that the Laplacian eigenvalues of the Cartesian product $G \square H$ are all possible sums of Laplacian eigenvalues of $G$ and $H$.
If either $G$ or $H$ has a Laplacian eigenvalue 1,
then 1 is a Laplacian eigenvalue of $G \square H$.
\v{S}pacapan~\cite{Spacapan} showed that
the connectivity of $G \square H$ is
$$
\kappa(G \square H) = \min\{ \kappa(G)|H|, \kappa(H)|G|, \delta(G \square H) \},
$$
where $\delta(G \square H)$ is the minimum degree of $G \square H$.
Remark that if $G$ and $H$ are connected graphs,
then $G \square H$ is 2-connected.
Thus we concentrate a 2-connected graph that does not decompose nontrivial graphs under the Cartesian product.
If a graph does not admit the nontrivial Cartesian product decomposition,
then the graph is called {\it prime} with respect to the Cartesian product. 
In this paper, we prove the following theorem.

\begin{theorem} \label{thm:main}
For any positive integer $n$, there is a 2-connected prime graph $G$ with respect to the Cartesian product,
$$
m_G(1) \ge n.
$$
\end{theorem}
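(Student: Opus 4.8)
The plan is to prove Theorem~\ref{thm:main} by writing down, for each $n$, one explicit graph and checking the three required properties by hand. For an integer $t\ge 3$ let $\Theta_t$ be the \emph{generalized theta graph} consisting of two vertices $u,w$ joined by $t$ internally disjoint paths of length $3$: that is, $V(\Theta_t)=\{u,w\}\cup\{v_1,\dots,v_t,v_1',\dots,v_t'\}$ with edges $uv_i$, $v_iv_i'$, $v_i'w$ for $1\le i\le t$. I claim that $\Theta_t$ is $2$-connected, is prime with respect to the Cartesian product, and satisfies $m_{\Theta_t}(1)\ge t-1$; taking $t=n+2$ then gives the theorem. (The map fixing every path index and interchanging $u\leftrightarrow w$ and $v_i\leftrightarrow v_i'$ generates a free action of $\mathbb Z_2$, so each $\Theta_t$ is a $2$-symmetric graph, which places the construction squarely inside the class studied here; if one prefers a genuinely $\mathbb Z_k$-symmetric example, replace the two hubs by $k$ hubs $u_0,\dots,u_{k-1}$ and run $s$ internally disjoint length-$3$ paths from $u_j$ to $u_{j+1}$ for each $j$, which behaves the same way when $k$ is odd.)

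The two structural properties are quick. For connectivity: $\Theta_t$ is connected, deleting $u$ or $w$ leaves a tree, and deleting an internal vertex $v_i$ or $v_i'$ leaves a connected graph, so $\kappa(\Theta_t)\ge 2$. For primeness I would argue through the girth. Since each $v_i$ and $v_i'$ has degree $2$, any cycle meeting the $i$-th $u$--$w$ path must contain all three of its edges, and a cycle needs at least two of these paths; hence the girth of $\Theta_t$ is $6$, so $\Theta_t$ has no $4$-cycle. On the other hand, if $G$ is connected on at least two vertices and $G=G_1\,\square\,G_2$ with $|G_1|,|G_2|\ge 2$, then $G_1$ and $G_2$ are connected, so each contains an edge, and then $G$ contains the $4$-cycle $(a_1,b_1)\,(a_2,b_1)\,(a_2,b_2)\,(a_1,b_2)$ for any edge $a_1a_2$ of $G_1$ and any edge $b_1b_2$ of $G_2$. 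Thus a connected $C_4$-free graph is prime, and in particular $\Theta_t$ is prime.

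The heart of the matter is the multiplicity bound, for which I would solve $L(\Theta_t)x=x$ directly. The eigenvalue equation at a vertex $p$ reads $\sum_{q\sim p}x_q=(\deg p-1)x_p$. At the degree-$2$ vertices this gives $x_{v_i}=x_u+x_{v_i'}$ and $x_{v_i'}=x_{v_i}+x_w$, which force $x_w=-x_u$ and $x_{v_i}-x_{v_i'}=x_u$ for all $i$; at the two degree-$t$ vertices it gives $\sum_i x_{v_i}=(t-1)x_u$ and $\sum_i x_{v_i'}=(t-1)x_w$. Substituting one obtains $(t-2)x_u=0$, so for $t\ge 3$ necessarily $x_u=x_w=0$, $x_{v_i}=x_{v_i'}$ for all $i$, and $\sum_i x_{v_i}=0$; conversely every vector of this shape solves $L(\Theta_t)x=x$. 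This solution space has dimension $t-1$, so $m_{\Theta_t}(1)\ge t-1$ (in fact it equals $t-1$ for $t\ge 3$).

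I expect the genuinely non-routine step to be finding the family, not verifying it. The two obvious sources of Laplacian eigenvalue $1$---Faria's pendant/quasi-pendant count and the additivity of the Laplacian spectrum under Cartesian products---each conflict with one of the hypotheses, since pendants preclude $2$-connectivity while a nontrivial Cartesian product is never prime. The idea that unlocks the proof is to replace a pendant edge by a length-$3$ path joining two hubs of large degree: this keeps $1$ in the spectrum (now with multiplicity governed by the common hub degree, through the cancellation $(t-2)x_u=0$ above) and, by forcing girth $6$, makes the graph automatically prime. Once $\Theta_t$ is fixed, the remainder of the proof is just the two short verifications sketched above.
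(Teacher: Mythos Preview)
Your proof is correct. The graph $\Theta_t$ you construct is in fact the special case $\overline{K}_2 \vee_2 (tK_2)$ of the paper's general family $\overline{K}_k \vee_k (G_1\cup\cdots\cup G_l)$: the two hubs $u,w$ form $\overline{K}_2$, each path $v_iv_i'$ is a copy of $K_2$ (a connected $2$-symmetric graph), and the $2$-symmetric join wires $u$ to the $v_i$'s and $w$ to the $v_i'$'s. So the example is the same, but the verifications differ in style. For the eigenvalue multiplicity, the paper invokes the equitable partition $(V(\overline{K}_k),V(G_1),\dots,V(G_l))$ and reads $m_G(1)\ge l-1$ off the divisor matrix via Lemma~\ref{lem:equit}, while you compute the full $1$-eigenspace of $\Theta_t$ by hand; your approach is more elementary and even gives equality $m_{\Theta_t}(1)=t-1$, whereas the equitable-partition argument works uniformly for arbitrary $k$ and arbitrary $G_i$. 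For primeness, both arguments rest on the $C_4$ criterion; you show the girth is $6$ (so no $C_4$ exists at all), while the paper locates a specific pair of incident edges through a hub that cannot lie in a common $C_4$. Either route is short; yours is self-contained, the paper's is stated at the level of generality needed for the whole family.
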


Meanwhile, integral spectra of Laplacian matrix or adjacency matrix are studied in various application fields including physics and chemistry~\cite{Christandl, Cvetkovic1, Cvetkovic2}.
A graph with integral Laplacian spectrum is called a {\it Laplacian integral graph}.
If a graph does not include the path $P_4$ as an induced subgraph,
then it is called a {\it cograph}.
In \cite{Merris}, Merris showed that a cograph is a Laplacian integral graph.
Many researchers~\cite{Del-Vecchio,Grone,Kirkland1,Kirkland2} have investigated infinitely many classes of Laplacian integral graphs that are not cographs.
In section \ref{sec:laplacian}, we introduce a new graph $\mathcal{C}(n,m)$ for some positive integers $n$ and $m$.
The graph $\mathcal{C}(n,m)$ is obtained by connecting several set of vertices for $m$ parallel copies of $n$-complete graph $K_n$ to the corresponding vertex of $\overline{K}_n$.
Later, we examine that if $n \geq 2$, then $\mathcal{C}(n,m)$ has the path $P_4$ as the induced subgraph,
that is, it is not a cograph.
In this paper, we also prove the following theorem.

\begin{theorem} \label{thm:main2}
There are infinitely many pairs of positive integers $n$ and $m$,
which make $\mathcal{C}(n,m)$ a Laplacian integral graph.
\end{theorem}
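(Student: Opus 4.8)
The plan is to compute the Laplacian spectrum of $\mathcal{C}(n,m)$ in closed form and then to read off exactly when every entry is an integer. Recall that $\mathcal{C}(n,m)$ is built from $m$ disjoint copies $K_n^{(1)},\dots,K_n^{(m)}$ of $K_n$ together with an independent set $\{c_1,\dots,c_n\}$ spanning a copy of $\overline{K}_n$, where the $j$-th vertex of each $K_n^{(i)}$ is adjacent to $c_j$. Ordering the vertices as $c_1,\dots,c_n$ followed by the vertices of $K_n^{(1)},\dots,K_n^{(m)}$, the Laplacian matrix has the block form
\[
L(\mathcal{C}(n,m))=\begin{pmatrix} mI_n & -I_n & \cdots & -I_n \\ -I_n & L_0 & & \\ \vdots & & \ddots & \\ -I_n & & & L_0 \end{pmatrix},\qquad L_0=(n+1)I_n-J_n ,
\]
and $\mathbb{Z}_m$ acts by cyclically permuting the $m$ diagonal blocks $L_0$. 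I would diagonalise $L$ by decomposing $\mathbb{C}^{n(m+1)}$ into isotypic components for this action, in the spirit of the equitable-partition and $k$-symmetric-join computations developed in Section~\ref{sec:laplacian}.

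On each of the $m-1$ nontrivial isotypic components the first block row and column drop out and $L$ acts simply as $L_0$; since $L_0$ has eigenvalue $1$ on $\mathbf 1$ and $n+1$ on $\mathbf 1^{\perp}$, these components contribute the eigenvalue $1$ with total multiplicity $m-1$ and the eigenvalue $n+1$ with total multiplicity $(m-1)(n-1)$. On the trivial (invariant) component a vector has the form $(y,x,\dots,x)$ and $L$ restricts to $(y,x)\mapsto(m(y-x),\,L_0x-y)$; splitting once more according as $x$ is parallel to $\mathbf 1$ or lies in $\mathbf 1^{\perp}$ reduces this to two $2\times 2$ eigenvalue problems, giving the simple eigenvalues $0$ and $m+1$ together with the two roots of $x^2-(m+n+1)x+mn$, each of multiplicity $n-1$. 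Collecting everything and using $(m+n+1)^2-4mn=(m-n+1)^2+4n$, the Laplacian spectrum of $\mathcal{C}(n,m)$ is
\[
0^{(1)},\quad (m+1)^{(1)},\quad 1^{(m-1)},\quad (n+1)^{((m-1)(n-1))},\quad \Bigl(\tfrac{(m+n+1)\pm\sqrt{(m-n+1)^2+4n}}{2}\Bigr)^{(n-1)} .
\]
Every eigenvalue here is an integer except possibly the last two, and those are integers precisely when $(m-n+1)^2+4n$ is a perfect square (the parity requirement is automatic, since $(m-n+1)^2+4n\equiv(m+n+1)^2\pmod 2$). Hence $\mathcal{C}(n,m)$ is Laplacian integral if and only if $(m-n+1)^2+4n$ is a perfect square.

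It remains to exhibit infinitely many such pairs. Writing $t^2=(m-n+1)^2+4n$ and factoring $t^2-(m-n+1)^2=4n$ into two integers of equal parity — necessarily both even, say $2u$ and $2v$ with $uv=n$ and $1\le u\le v$ — one gets $m=n+v-u-1=(u+1)(v-1)$. Taking $u=1$ produces the one-parameter family $m=2(n-1)$, for which $(m-n+1)^2+4n=(n-1)^2+4n=(n+1)^2$, so the last two eigenvalues become $2n$ and $n-1$. Thus $\mathcal{C}(n,2n-2)$ is Laplacian integral for every integer $n\ge 2$, which yields infinitely many pairs $(n,m)$ and proves the theorem; and these graphs are not cographs since $n\ge 2$. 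As a consistency check, $\mathcal{C}(2,2)$ is the hexagon $C_6$ and the formula returns $\{0,1,1,3,3,4\}$, which is indeed its Laplacian spectrum.

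The main obstacle is the first half: carrying out the $\mathbb{Z}_m$-symmetry reduction rigorously — in particular handling the invariant component with the correct weighting, so that the non-integer part of the spectrum is correctly identified with the quadratic $x^2-(m+n+1)x+mn$ rather than some spurious factor. Once the closed-form spectrum is in hand, the reduction to the condition that $(m-n+1)^2+4n$ be a perfect square and the choice of the family $m=2(n-1)$ are elementary.
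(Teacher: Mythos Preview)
Your argument is correct. The spectrum you obtain matches the paper's Theorem~\ref{thm:char} exactly, and your parameterisation $n=uv$, $m=(u+1)(v-1)$ is precisely the paper's family $\mathcal{C}(kl,(k+1)(l-1))$; you then specialise to $u=1$ to get the one-parameter family $\mathcal{C}(n,2n-2)$, which already suffices for the theorem.

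Where your route genuinely differs from the paper is in the first half. The paper computes the characteristic polynomial of $L(\mathcal{C}(n,m))$ by a Schur-complement/block-determinant calculation over $\mathbb{C}(x)$, using the explicit inverse of $(x-1)I_n-L(K_n)=(x-n-1)I_n+J_n$ from Corollary~\ref{coro:detinv}. You instead exploit the $\mathbb{Z}_m$-action permuting the $m$ copies of $K_n$: the $m-1$ nontrivial isotypic components see only $L_0=(n+1)I_n-J_n$, while the invariant component reduces to a $2n\times 2n$ block that you further split along $\langle\mathbf 1\rangle\oplus\mathbf 1^{\perp}$. This is cleaner conceptually and explains \emph{why} the spectrum has the shape it does (the factor $(x-1)^{m-1}(x-(n+1))^{(m-1)(n-1)}$ comes straight from the $m-1$ copies of $L_0$), whereas the paper's determinant computation is more mechanical but has the virtue of generalising verbatim to the $\mathcal{C}(n,k,m)$ setting of Theorem~\ref{thm:char-k}, where the relevant symmetry group is smaller. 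Either approach is perfectly adequate for Theorem~\ref{thm:main2}.
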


This paper is organized as follows.
In Section~\ref{sec:pre}, We provide some linear algebra results needed for proof of main theorems.
In Section~\ref{sec:symm}, We define the $k$-symmetric graph by relaxing the condition of symmetric graph, and examine its properties.
In Section~\ref{sec:mult} and Section~\ref{sec:laplacian},
we prove the main theorems and related properties.


\section{Preliminaries} \label{sec:pre}
In this section, we introduce some definitions and properties that will be used in this paper.
The set of all $m \times n$ matrices over a field $\mathbb{F}$ is denoted by $M_{m\times n}(\mathbb{F})$. Denote $M_{n\times n}(\mathbb{F})$ by $M_n(\mathbb{F})$.
We denote by $I_n$ and $J_n$ the $n \times n$ identity matrix and the $n \times n$ matrix whose entries are ones. 
Also, $1_n$ is the $n$-vector of all ones.

Let $A \in M_n(\mathbb{F})$ be a block matrix of the form
  \begin{equation*}
      A= \begin{pmatrix}
       A_{11} & A_{12} \\ A_{21} & A_{22}
      \end{pmatrix},
  \end{equation*}
where $A_{11} \in M_{m}(\mathbb{F})$, $A_{12} \in M_{m \times (n-m)}(\mathbb{F})$, $A_{21} \in M_{(n-m) \times m}(\mathbb{F})$ and $A_{22}\in M_{n-m}(\mathbb{F})$.
It is well known that if $A_{22}$ is invertible, then $\det{A}=\det{A_{22}}\det(A_{11}-A_{12}A_{22}^{-1}A_{21})$ (see \cite[Chapter 0]{Horn2}).

For two matrices $A=(a_{ij}) \in M_{m\times n}(\mathbb{F})$ and $B \in M_{p\times q}(\mathbb{F})$, the {\it Kronecker product} of $A$ and $B$, denoted by $A\otimes B$, is defined as 
\begin{equation*}
    A\otimes B=\begin{pmatrix}
        a_{11}B & a_{12}B & \cdots & a_{1n}B\\
        a_{21}B & a_{22}B & \cdots & a_{2n}B\\
        \vdots & \vdots & \ddots & \vdots\\
        a_{m1}B & a_{m2}B & \cdots & a_{mn}B\\
    \end{pmatrix}.
\end{equation*}
We state some basic properties of the Kronecker product (for more details, see \cite[Chapter 4]{Horn1}):
\begin{itemize}
    \item[(a)] $A\otimes(B+C)=A\otimes B + A\otimes C$.
    \item[(b)] $(B+C) \otimes A = B\otimes A+ C\otimes A$.
    \item[(c)] $(A\otimes B)(C \otimes D)=AC \otimes BD$.
    \item[(d)] If $A\in M_m(\mathbb{F})$ and $B\in M_n(\mathbb{F})$ are invertible, then $(A \otimes B)^{-1}=A^{-1}\otimes B^{-1}$.
    \item[(e)] $\det(A\otimes B)=(\det{A})^n(\det{B})^m$ for $A \in M_m(\mathbb{F})$ and $B \in M_n(\mathbb{F})$.
\end{itemize}

A matrix $T\in M_n(\mathbb{F})$ of the form
\begin{equation*}
    T=\begin{pmatrix}
        a_0 & a_{-1} & a_{-2} & \cdots & a_{-(n-1)}\\
        a_1 & a_0 & a_{-1} & \cdots & a_{-(n-2)}\\
        a_2 & a_1 & a_0 & \cdots & a_{-(n-3)}\\
        \vdots & \vdots & \vdots & \ddots &\vdots \\
        a_{n-1} & a_{n-2} & a_{n-3} & \cdots & a_0\\
    \end{pmatrix}
\end{equation*}
is called a {\it Toeplitz matrix}.
In \cite{Lv}, the authors gave a Toeplitz matrix inversion formula.
\begin{theorem}[\cite{Lv}, Theorem 1]\label{thm:Toe}
  Let $T=(a_{i-j})^n_{i,j=1}$ be a Toeplitz matrix and let $f=(0,a_{n-1}-a_{-1},\cdots,a_{2}-a_{-(n-2)},a_{1}-a_{-(n-1)})^T$ and $e_1=(1,0, \cdots, 0)^T$. If each of the systems of equations $Tx=f$, $Ty=e_1$ is solvable, $x=(x_1,x_2,\ldots,x_n)^T$, $y=(y_1,y_2,\ldots,y_n)^T$, then
  \begin{itemize}
      \item[(a)] $T$ is invertible;
      \item[(b)] $T^{-1}=T_1U_1+T_2U_2$, where
      \begin{equation*}
          T_1=\begin{pmatrix}
              y_1 & y_n & \cdots & y_2\\
              y_2 &  y_1 & \ddots & \vdots \\
              \vdots & \ddots & \ddots &y_n\\
              y_n & \cdots & y_2 & y_1
          \end{pmatrix}
          ,
          \qquad
          U_1=\begin{pmatrix}
              1 & -x_n & \cdots & -x_2\\
              0 & 1 & \ddots & \vdots \\
              \vdots & \ddots & \ddots & -x_n\\
              0 & \cdots &  0& 1
          \end{pmatrix},
      \end{equation*}
      \begin{equation*}
          T_2=\begin{pmatrix}
              x_1 & x_n & \cdots & x_2\\
              x_2 &  x_1 & \ddots & \vdots \\
              \vdots & \ddots & \ddots &x_n\\
              x_n & \cdots & x_2 & x_1
          \end{pmatrix}
          ,~~\text{and}~~
          U_2=\begin{pmatrix}
              0 & y_n & \cdots & y_2\\
              0 & 0 & \ddots & \vdots \\
              \vdots & \ddots & \ddots & y_n\\
              0 & \cdots &  0& 0
          \end{pmatrix}
            .
      \end{equation*}
  \end{itemize}
\end{theorem}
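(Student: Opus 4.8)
The plan is to treat Theorem~\ref{thm:Toe} as a Gohberg--Semencul--type inversion formula and prove both parts in one stroke by \emph{exhibiting a right inverse}. Set $S := T_1U_1 + T_2U_2$ and verify directly that $TS = I_n$. Since $T$ is square, this gives (a) at once (a square matrix over a field possessing a right inverse is invertible) and then (b), because the right inverse of an invertible matrix is its inverse. So everything reduces to checking one matrix identity, using only the hypotheses $Ty = e_1$ and $Tx = f$ for the chosen solutions --- no uniqueness is needed in advance, it follows afterwards from (a) --- together with the band structure of $T$.

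To carry out the verification I would exploit the shapes of the four blocks. Let $Z$ be the nilpotent down-shift ($Ze_i = e_{i+1}$, $Ze_n = 0$). Reading successive diagonals of the band gives the column recursion $Te_{j+1} = ZTe_j + a_{-j}e_1$ for $1 \le j \le n-1$, so $T$ applied to a shifted vector is governed by the first-row entries $a_{-1},\dots,a_{-(n-1)}$, with a mirror recursion for the last column involving $a_1,\dots,a_{n-1}$. Meanwhile $T_1$ and $T_2$ are the circulants whose first columns are $y$ and $x$, so their columns obey a cyclic-shift recursion, and one can write $U_1 = I_n - N_x$ and $U_2 = N_y$ with $N_x, N_y$ strictly upper triangular Toeplitz. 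Expanding $TS = TT_1 - TT_1N_x + TT_2N_y$ and inspecting column $j$: column $1$ is immediate since $TT_1e_1 = Ty = e_1$; for $j \ge 2$ the ``wrap-around'' terms produced by the cyclic shifts of $y$, together with the band corrections $a_{-j}e_1$, should be cancelled exactly by the $TT_2N_y$ contribution, \emph{because} $f$ is engineered so that $Tx = f$ supplies precisely those corrections. After the cancellation column $j$ of $TS$ equals $e_j$, that is, $TS = I_n$.

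The main obstacle is the bookkeeping in that last step: tracking, at each shift, which band coefficient $a_{-k}$ and which cyclically displaced entry of $y$ occurs, and confirming that the $x$-terms kill exactly the spurious pieces, column by column. This is the cancellation that forces the particular vector $f$; conceptually it is also why two solvable systems suffice, since $x = T^{-1}f$ and $y = T^{-1}e_1$ together encode enough of $T^{-1}$ --- in effect, essentially both its first and last columns --- to reconstruct the whole matrix. I would pin down the exact signs and index shifts in $f$, $N_x$, $N_y$ by first running the computation in a small case ($n = 2$ or $3$) before attempting it in general.

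If the direct cancellations become unwieldy, the fallback is the displacement-structure route. One records the identity $T - ZTZ^{\mathsf T} = (Te_1)e_1^{\mathsf T} + e_1 w^{\mathsf T}$ with $w = (0, a_{-1},\dots,a_{-(n-1)})^{\mathsf T}$, so $T$ has displacement rank at most $2$; one then establishes (a) separately --- solvability of both systems forces $T$ to be onto, since $e_1$ and $f$ span the two range directions that the rank-two displacement leaves free --- transfers the displacement equation to $T^{-1}$ to obtain $T^{-1} - Z^{\mathsf T}T^{-1}Z = gh^{\mathsf T} - pq^{\mathsf T}$ with $g,h,p,q$ built from $x$ and $y$, and integrates this relation by telescoping $(T^{-1})_{i,j} = (T^{-1})_{i+1,j+1} + (gh^{\mathsf T} - pq^{\mathsf T})_{i,j}$ down to the boundary row and column. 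The telescoped expression is a sum of two products of triangular Toeplitz matrices, which after reconciling the boundary data is exactly $T_1U_1 + T_2U_2$. Here the obstacle shifts to computing the generators $g,h,p,q$ explicitly and to supplying the separate surjectivity argument for (a).
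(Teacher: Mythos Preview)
The paper does not prove this theorem at all: it is quoted verbatim as Theorem~1 of \cite{Lv} and used only as a black box to derive Corollary~\ref{coro:detinv}. So there is no ``paper's own proof'' to compare against.

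That said, your plan is a standard and sound route to a Gohberg--Semencul--type formula. Verifying $TS=I_n$ directly does indeed give (a) and (b) simultaneously, and the column-by-column recursion via the down-shift $Z$ is the right bookkeeping device; the displacement-rank fallback is also a well-trodden path. The only caveat is that your write-up is a sketch rather than a proof: the crucial cancellation in the $j\ge 2$ columns is asserted (``should be cancelled exactly'') rather than carried out, and in the fallback the surjectivity argument for (a) is gestured at but not done. If you intend to actually supply a proof, you will need to execute one of the two routes in full; if you only need the result to feed into Corollary~\ref{coro:detinv}, citing \cite{Lv} as the paper does is entirely adequate.
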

\begin{corollary}\label{coro:detinv}
  Let $aI_n+bJ_n$ be a matrix in $M_n(\mathbb{F})$.
  Then 
  \begin{itemize}
      \item[(a)]$\det(aI_n+bJ_n)=a^{n-1}(a+nb)$.
      \item[(b)] If $aI_n+bJ_n$ is invertible, then its inverse matrix is  $$\frac{1}{a(a+nb)}((a+nb)I_n-bJ_n).$$
  \end{itemize}
\end{corollary}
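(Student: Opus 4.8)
The plan is to treat the two parts separately, in both cases reducing to elementary manipulations with the rank-one matrix $J_n = 1_n 1_n^T$, which satisfies $J_n^2 = nJ_n$.

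For part (a) I would compute $\det(aI_n+bJ_n)$ by row operations, since this works verbatim over an arbitrary field $\mathbb{F}$. Adding all rows to the first replaces the first row by $(a+nb)\,1_n^{T}$, because every column of $aI_n+bJ_n$ sums to $(a+b)+(n-1)b = a+nb$; factoring out the scalar $a+nb$ leaves a matrix whose first row is $1_n^{T}$ and whose remaining rows are those of $aI_n+bJ_n$. Subtracting $b$ times the new first row from each of the other rows clears all their off-diagonal entries and turns the $(i,i)$ entry into $a$ for $2 \le i \le n$, leaving an upper-triangular matrix with determinant $a^{n-1}$. Hence $\det(aI_n+bJ_n) = a^{n-1}(a+nb)$. (Equivalently, one could diagonalize: $J_n$ acts as multiplication by $n$ on $\langle 1_n\rangle$ and by $0$ on the hyperplane $\{x : 1_n^{T}x = 0\}$, so $aI_n+bJ_n$ has eigenvalue $a+nb$ once and $a$ with multiplicity $n-1$; or one could apply the block-determinant identity recalled in Section~\ref{sec:pre} to a suitable bordered matrix.)

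For part (b), part (a) shows that invertibility of $aI_n+bJ_n$ is equivalent to $a(a+nb) \neq 0$, so the matrix $N := \tfrac{1}{a(a+nb)}\bigl((a+nb)I_n - bJ_n\bigr)$ is well defined, and it suffices to check $N$ is a one-sided inverse. Using $J_n^2 = nJ_n$,
\[
(aI_n+bJ_n)\bigl((a+nb)I_n-bJ_n\bigr) = a(a+nb)I_n + \bigl(-ab + b(a+nb) - nb^{2}\bigr)J_n = a(a+nb)I_n,
\]
since the coefficient of $J_n$ collapses to $0$; dividing by the scalar $a(a+nb)$ gives $(aI_n+bJ_n)N = I_n$. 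Alternatively, to exhibit (b) directly as a consequence of Theorem~\ref{thm:Toe}: $aI_n+bJ_n$ is the symmetric Toeplitz matrix with $a_0 = a+b$ and $a_j = a_{-j} = b$ for $1 \le j \le n-1$, so in the notation of that theorem $f = 0$ (every difference $a_{j}-a_{-j}$ vanishes) and one may take $x = 0$; solving $Ty = e_1$ with the ansatz $y = \alpha e_1 + \gamma 1_n$ forces $\alpha = 1/a$ and $\gamma = -b/(a(a+nb))$; then $T_2 = 0$ and $U_1 = I_n$, so $T^{-1} = T_1$, and $T_1$ is precisely the matrix with diagonal entries $y_1 = (a+(n-1)b)/(a(a+nb))$ and all other entries $-b/(a(a+nb))$, i.e. $T_1 = N$.

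I do not expect a genuine obstacle here: the statement is a routine specialization, and the only points needing minor care are keeping part (a) valid over an arbitrary field (hence the preference for the row-operation argument over diagonalization, which needs $n$ invertible in $\mathbb{F}$) and noting in part (b) that the displayed formula presupposes $a(a+nb)\neq 0$, which by part (a) is exactly the invertibility hypothesis when $n \ge 2$.
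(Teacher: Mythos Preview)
Your argument is correct. For part (a) the paper also uses elementary row/column operations, just a slightly different sequence (subtract successive rows, then add columns cumulatively) arriving at a lower-triangular matrix with diagonal $(a,\ldots,a,a+nb)$; your ``sum all rows into the first, then sweep'' variant is equivalent in spirit and equally valid over any field.

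For part (b) your \emph{primary} route---direct verification via $J_n^2=nJ_n$---is not what the paper does: the paper proves (b) solely by invoking Theorem~\ref{thm:Toe}, exactly the alternative you sketch at the end (take $x=0$, solve $(aI_n+bJ_n)y=e_1$ to get $y_1=(a+(n-1)b)/(a(a+nb))$ and $y_i=-b/(a(a+nb))$ for $i\ge 2$, whence $T^{-1}=T_1$). Your direct multiplication is shorter and more self-contained; the paper's choice is explained by the fact that the result is stated as a Corollary of the Toeplitz inversion formula, so illustrating that formula is the point. Either way the content is the same, and since you already recorded the Toeplitz derivation your write-up effectively subsumes the paper's proof.
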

\begin{proof}
  \begin{itemize}
    \item[(a)] It is easy to check that
        \begin{equation*}
            \det \begin{pmatrix}
                  a+b & b & b & \cdots & b \\
                  b & a+b & b & \cdots & b \\
                  b & b & a+b & \cdots & b \\
                  \vdots & \vdots & \vdots & \ddots& \vdots\\
                  b & b & b & \cdots & a+b \\ 
              \end{pmatrix}
              =
              \det
              \begin{pmatrix}
                  a & 0 & 0 & \cdots & 0 \\
                  0 & a & 0 & \cdots & 0 \\
                  0 & 0 & a & \cdots & 0 \\
                  \vdots & \vdots & \vdots & \ddots& \vdots\\
                  b & 2b & 3b & \cdots & a+nb \\ 
              \end{pmatrix}.
          \end{equation*}
        Hence the determinant of $aI_n+bJ_n$ is $a^{n-1}(a+nb)$. 
    \item[(b)] Note that the matrix $aI_n+bJ_n$ is Toeplitz. Let 
    $$x=(0,\ldots,0)^T \text{~~and~~} y=\bigg(\frac{a+nb-b}{a(a+nb)},\frac{-b}{a(a+nb)},\ldots,\frac{-b}{a(a+nb)}\bigg)^T.$$
        Then $(aI_n+bJ_n)x=0$ and $(aI_n+bJ_n)y=e_1$.
        By Theorem \ref{thm:Toe}, 
         the inverse of $aI_n+bJ_n$ is $$\frac{1}{a(a+nb)}((a+nb)I_n-bJ_n).$$
  \end{itemize}
\end{proof}


\section{k-Symmetric graphs} \label{sec:symm}

Symmetry is an important property of graphs.
We deal with graphs that has symmetric property.
Let $G$ be a graph.
An {\it automorphism} $\varphi$ of a graph $G$ is a permutation of $V(G)$ such that
$\varphi(v)$ and $\varphi(w)$ are adjacent if and only if $v$ and $w$ are adjacent
where $v$ and $w$ are vertices of $G$.
The set of all automorphisms of $G$ is called an {\it automorphism group} of $G$ and denoted by $\text{Aut}(G)$.
A graph $G$ is {\it symmetric}
if $\text{Aut}(G)$ acts transitively on both vertices of $G$ and ordered pairs of adjacent vertices.
This implies that $G$ is {\it regular},
that is, all vertices have the same degree.
However, it is a very difficult problem to determine whether a given graph is a symmetric graph.
Thus we concentrate the cyclic part of $\text{Aut}(G)$.
In this section, we define $k$-symmetric graphs and give some their properties.
Also, we construct a $k$-symmetric graph from other $k$-symmetric graphs.

\begin{definition}
  Let $k$ be a positive integer.
  A graph $G$ is {\it $k$-symmetric}
  if there is a subgroup $\mathcal{H}$ of $\text{Aut}(G)$ such that
  $\mathcal{H}$ is isomorphic to $\mathbb{Z}_k$
  and $\mathcal{H}$ freely act on vertices.
  A generator of $\mathcal{H}$ is called a {\it $k$-symmetric automorphism}.
\end{definition}

The above definition tells us that all graphs are 1-symmetric because the trivial group freely acts on any graph.
If a graph $G$ with $n$ vertices is $n$-symmetric,
then the automorphism group $\text{Aut}(G)$ has a cyclic subgroup $\mathcal{H}$ which transitively acts on vertices.
Thus $G$ is regular.
However, the converse is not true even though $G$ is a symmetric graph.
Before examining this, we check the following proposition.

\begin{proposition}
Let $G$ be a graph with $n$ vertices.
If $G$ is $n$-symmetric,
then either $G$ or its complement $\overline{G}$ have a Hamiltonian cycle.
\end{proposition}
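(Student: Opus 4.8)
The plan is to exploit freeness of the $\mathbb{Z}_k$-action (here $k=n$) to recognize $G$ as a circulant graph, and then read off a Hamiltonian cycle of $G$ or of $\overline{G}$ directly from a generator of $\mathbb{Z}_n$.

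First I would note that a free action of a group of order $n$ on an $n$-element set is transitive, since every orbit has size $n$ and the orbits partition $V(G)$. So if $\sigma$ generates the subgroup $\mathcal{H}\cong\mathbb{Z}_n$ of $\text{Aut}(G)$, then fixing a vertex $v_0$ and setting $v_i:=\sigma^i(v_0)$ gives $V(G)=\{v_0,\dots,v_{n-1}\}$ with $\sigma(v_i)=v_{i+1}$, indices read modulo $n$. Since $\sigma$ is an automorphism, $v_i\sim v_j$ in $G$ if and only if $v_0\sim v_{j-i}$; hence, writing $S:=\{s\in\mathbb{Z}_n\setminus\{0\}: v_0\sim v_s\}$, the graph $G$ is exactly the circulant graph on $\mathbb{Z}_n$ with connection set $S$ (and $S=-S$, as $G$ is undirected), and $\overline{G}$ is the circulant graph with connection set $\overline{S}:=(\mathbb{Z}_n\setminus\{0\})\setminus S$.

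Next I would establish the elementary observation that if a connection set $T\subseteq\mathbb{Z}_n\setminus\{0\}$ contains some $d$ with $\gcd(d,n)=1$, then the corresponding circulant graph has a Hamiltonian cycle: because $d$ generates $\mathbb{Z}_n$, the map $i\mapsto id \bmod n$ is a bijection, so the vertices $v_0,v_d,v_{2d},\dots,v_{(n-1)d}$ are distinct and exhaust $V(G)$, while consecutive vertices — including $v_{(n-1)d}$ and $v_{nd}=v_0$ — differ by $d\in T$ and are therefore adjacent, so they trace out a spanning cycle (for $n\ge 3$; the cases $n\le 2$ are degenerate). To finish, observe that $1\in\mathbb{Z}_n\setminus\{0\}$ is coprime to $n$ and lies in $S$ or in $\overline{S}$; applying the observation to whichever of these sets contains it yields a Hamiltonian cycle in $G$ or in $\overline{G}$. (Equivalently, one may use any of the $\phi(n)\ge 1$ residues coprime to $n$, which are partitioned between $S$ and $\overline{S}$.) I expect the only real care to be needed in the first step — verifying that freeness genuinely upgrades the labelling to a $\mathbb{Z}_n$-translation structure, i.e.\ that $G$ is circulant — after which the rest is immediate and, notably, requires no counting of how the coprime residues distribute between $S$ and $\overline{S}$.
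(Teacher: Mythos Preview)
Your proof is correct and follows essentially the same idea as the paper: both arguments fix a generator $\sigma$ of the $\mathbb{Z}_n$-action, list the vertices as $v_0,\sigma(v_0),\dots,\sigma^{n-1}(v_0)$, and observe that this sequence is a Hamiltonian cycle in $G$ if $v_0\sim\sigma(v_0)$ and in $\overline{G}$ otherwise. Your circulant-graph framing and the remark about arbitrary $d$ coprime to $n$ add a bit of context, but the core argument (taking $d=1$) is exactly the paper's.
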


\begin{proof}
Let $G$ be an $n$-symmetric graph with $n$ vertices,
and let $\varphi$ be an $n$-symmetric automorphism of $G$.
Choose a vertex $v$.
If $v$ and $\varphi(v)$ are adjacent,
then $\varphi^i(v)$ and $\varphi^{i+1}(v)$ are also adjacent for any interger $i$.
Since $G$ is $n$-symmetric,
the group generated by $\varphi$ acts freely and transitively on $V(G)$.
Thus the sequence $v, \varphi(v), \dots, \varphi^n(v)$ induces a Hamiltonian cycle of $G$.
Suppose that $v$ and $\varphi(v)$ are not adjacent in $G$.
Then $v$ and $\varphi(v)$ are adjacent in $\overline{G}$.
Hence the sequence of vertices induces a Hamiltonian cycle of $\overline{G}$.
\end{proof}

For example, the Petersen graph in Figure~\ref{fig:petersen} is 5-symmetric because the 5-fold rotation satisfies the 5-symmetric automorphism condition.
The Petersen graph is a symmetric graph with 10 vertices.
But since the Petersen graph is not Hamiltonian,
it is not 10-symmetric.
For any positive integer $k$,
$k$-symmetric graphs are satisfying the following properties.

\begin{figure}[h!]
\centering
\includegraphics[width=0.55\textwidth]{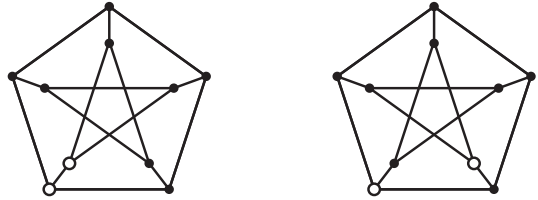}
\caption{The Petersen graphs with different bases for the 5-fold rotation}
\label{fig:petersen}
\end{figure}

\begin{proposition}
    Let $G$ be a $k$-symmetric graph for some integer $k$ and let $d$ be a divisor of $k$.
    Then $G$ is a $d$-symmetric graph.
\end{proposition}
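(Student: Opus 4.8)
The plan is to extract the $d$-symmetry directly from the cyclic subgroup $\mathcal{H} \cong \mathbb{Z}_k$ that witnesses the $k$-symmetry of $G$. Since $d$ divides $k$, the cyclic group $\mathbb{Z}_k$ has a unique subgroup of order $d$; concretely, if $\varphi$ is a $k$-symmetric automorphism (a generator of $\mathcal{H}$), then $\psi := \varphi^{k/d}$ has order exactly $d$ in $\mathrm{Aut}(G)$, and the subgroup $\mathcal{H}' := \langle \psi \rangle$ is isomorphic to $\mathbb{Z}_d$. So the candidate witnessing subgroup is $\mathcal{H}'$, and the only thing left to verify is that $\mathcal{H}'$ acts freely on $V(G)$.

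For the freeness: suppose some power $\psi^j = \varphi^{j k/d}$ fixes a vertex $v$, where $0 \le j < d$. Because $\mathcal{H} = \langle \varphi \rangle$ acts freely on $V(G)$, the only element of $\mathcal{H}$ fixing $v$ is the identity, so $\varphi^{jk/d} = \mathrm{id}$, which forces $k \mid jk/d$, i.e. $d \mid j$, hence $j = 0$ and $\psi^j = \mathrm{id}$. Thus $\mathcal{H}'$ acts freely. Combined with $\mathcal{H}' \cong \mathbb{Z}_d$ and $\mathcal{H}' \le \mathrm{Aut}(G)$, this is exactly the definition of $G$ being $d$-symmetric.

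There is essentially no hard step here — the statement is a soft consequence of the definition together with elementary cyclic group theory. The only point that requires a line of care is the freeness argument: one must make sure that freeness of the ambient action $\mathcal{H} \curvearrowright V(G)$ is used correctly, namely that a stabilizer being trivial in $\mathcal{H}$ forces the stabilizer to be trivial in the subgroup $\mathcal{H}'$ as well (which is immediate since $\mathcal{H}' \le \mathcal{H}$). I would also remark, as a sanity check consistent with the Petersen example in the text, that this monotonicity only goes downward along divisors: a $d$-symmetric graph need not be $k$-symmetric for multiples $k$ of $d$.
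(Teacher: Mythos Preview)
Your argument is correct and follows essentially the same route as the paper: take $\psi=\varphi^{k/d}$ and observe that $\langle\psi\rangle\cong\mathbb{Z}_d$. If anything, your write-up is more careful than the paper's, since you explicitly verify that $\langle\psi\rangle$ acts freely on $V(G)$, whereas the paper simply asserts $\langle\psi\rangle\cong\mathbb{Z}_d$ and concludes without mentioning freeness.
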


\begin{proof}
  Let $\varphi$ be a $k$-symmetric automorphism of $G$ and let $k=k'd$ for some integer $k'$.
  Define an automorphism $\psi$ of $G$ by $\psi=\varphi^{k'}$.
  Then $\psi^d(v)=\varphi^{k'd}(v)=\varphi^k(v)=\text{id}_G(v)$ for all $v \in V(G)$.
  The subgroup $\langle \psi\rangle$ of $\text{Aut}(G)$ is isomorphic to $\mathbb{Z}_d$.
  Thus $G$ is a $d$-symmetric graph.
\end{proof}

\begin{proposition}\label{prop:union}
  Let $G_1$ and $G_2$ be $k$-symmetric graphs for some integer $k$. Then $G_1 \cup G_2$ is $k$-symmetric graph.
\end{proposition}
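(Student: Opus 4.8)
The plan is to build a $k$-symmetric automorphism of $G_1 \cup G_2$ directly from $k$-symmetric automorphisms of the two factors. First I would fix $k$-symmetric automorphisms $\varphi_1 \in \mathrm{Aut}(G_1)$ and $\varphi_2 \in \mathrm{Aut}(G_2)$, each generating a copy of $\mathbb{Z}_k$ acting freely on the corresponding vertex set. Interpreting $G_1 \cup G_2$ as the graph on the disjoint union $V(G_1) \sqcup V(G_2)$ with edge set $E(G_1) \sqcup E(G_2)$, I would define $\varphi \colon V(G_1)\sqcup V(G_2) \to V(G_1)\sqcup V(G_2)$ by $\varphi|_{V(G_1)} = \varphi_1$ and $\varphi|_{V(G_2)} = \varphi_2$.

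Next I would verify that $\varphi \in \mathrm{Aut}(G_1 \cup G_2)$. Since no edge of $G_1\cup G_2$ joins a vertex of $G_1$ to a vertex of $G_2$, two vertices are adjacent in $G_1 \cup G_2$ if and only if they lie in the same factor $G_i$ and are adjacent there; as $\varphi_i$ preserves adjacency and non-adjacency within $G_i$, so does $\varphi$ on all of $G_1 \cup G_2$, and the same argument applied to $\varphi^{-1}$ (built from $\varphi_1^{-1}$ and $\varphi_2^{-1}$) shows $\varphi$ is an automorphism. Then I would check that $\langle \varphi\rangle \cong \mathbb{Z}_k$ and acts freely on vertices: for $1 \le i \le k-1$ and $v \in V(G_1)$ we have $\varphi^i(v) = \varphi_1^i(v) \ne v$ because $\langle \varphi_1\rangle$ acts freely, and symmetrically for $v \in V(G_2)$; hence $\varphi^i \ne \mathrm{id}_{G_1 \cup G_2}$ for $0 < i < k$, while $\varphi^k = \mathrm{id}_{G_1 \cup G_2}$. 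Thus $\varphi$ has order exactly $k$ and no nontrivial power of $\varphi$ fixes a vertex, so $\langle\varphi\rangle$ is the desired $\mathbb{Z}_k$-subgroup of $\mathrm{Aut}(G_1\cup G_2)$ acting freely on $V(G_1 \cup G_2)$, proving $G_1 \cup G_2$ is $k$-symmetric.

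There is essentially no hard step here; the only points that need care are the (standard) convention that $\cup$ denotes the disjoint union, so that the two automorphisms can be glued without conflict and no cross edges arise, and the small bookkeeping that freeness of each $\varphi_i$ forces $\varphi$ to have order exactly $k$ rather than a proper divisor of $k$. The same construction extends verbatim to any finite union of $k$-symmetric graphs, which is presumably how the proposition will be used in later sections.
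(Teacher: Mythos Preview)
Your proof is correct and follows exactly the same construction as the paper: define $\varphi$ on $V(G_1)\sqcup V(G_2)$ by acting as $\varphi_i$ on $V(G_i)$ (the paper denotes this map $\varphi_1+\varphi_2$). The paper's argument is terser, omitting the routine verifications of automorphism, order, and freeness that you spell out.
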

\begin{proof}
  Let $\varphi_1$ and $\varphi_2$ be $k$-symmetric automorphisms of $G_1$ and $G_2$, respectively. Then the automorphism $\varphi_1+\varphi_2$ of $G_1 \cup G_2$ is defined by 
  \begin{equation*}
     (\varphi_1+\varphi_2)(v)=\left\{\begin{array}{ll}
          \varphi_1(v), &\mbox{if $v \in V(G_1)$},  \\
          \varphi_2(v), &\mbox{if $v \in V(G_2)$} 
     \end{array}\right.  .
  \end{equation*}
  Hence $G_1 \cup G_2$ is a $k$-symmetric graph.

\end{proof}

 Let $\varphi$ be a $k$-symmetric automorphism of a graph $G$ and let $\operatorname{id}_H$ be the identity automorphism of a graph $H$. Then the automorphism $\varphi \times \operatorname{id}_H$ of $G \square H$ is $k$-symmetric. Thus we obtain the following proposition.

\begin{proposition}\label{prop:cartesian}
  Let $G$ be a $k$-symmetric graphs for some integer $k$. For any graph $H$, the Cartesian product $G \square H$ is $k$-symmetric graph.
\end{proposition}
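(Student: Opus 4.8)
The plan is to follow the hint immediately preceding the statement: exhibit an explicit $k$-symmetric automorphism of $G \square H$ built from one of $G$. First I would fix a $k$-symmetric automorphism $\varphi$ of $G$, so that $\langle \varphi \rangle \le \operatorname{Aut}(G)$ is isomorphic to $\mathbb{Z}_k$ and acts freely on $V(G)$; in particular $\varphi$ has order exactly $k$, and $\varphi^j(v) \neq v$ for every $v \in V(G)$ and every $j$ with $0 < j < k$. Then define $\psi \colon V(G) \times V(H) \to V(G) \times V(H)$ by $\psi(v,w) = (\varphi(v), w)$, that is, $\psi = \varphi \times \operatorname{id}_H$.

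The first thing to check is that $\psi \in \operatorname{Aut}(G \square H)$. Since $\varphi$ is a bijection on $V(G)$, the map $\psi$ is a bijection on $V(G) \times V(H)$. For adjacency, recall that $(v,w)$ and $(v',w')$ are adjacent in $G \square H$ precisely when either $v = v'$ and $w$ is adjacent to $w'$ in $H$, or $w = w'$ and $v$ is adjacent to $v'$ in $G$. Because $\varphi$ is an automorphism of $G$, we have $v = v' \iff \varphi(v) = \varphi(v')$ and ($v$ adjacent to $v'$ in $G$) $\iff$ ($\varphi(v)$ adjacent to $\varphi(v')$ in $G$), while the $H$-coordinates are untouched. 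Hence $(v,w)$ is adjacent to $(v',w')$ if and only if $\psi(v,w)$ is adjacent to $\psi(v',w')$, so $\psi$ preserves adjacency and is an automorphism of $G \square H$.

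Next I would verify that $\langle \psi \rangle \cong \mathbb{Z}_k$ and that it acts freely on $V(G \square H)$. A straightforward induction gives $\psi^j(v,w) = (\varphi^j(v), w)$ for all $j \ge 0$, so $\psi^k = \operatorname{id}_{G \square H}$; and if $\psi^j = \operatorname{id}$ for some $0 < j < k$, then $\varphi^j = \operatorname{id}_G$, contradicting that $\varphi$ has order $k$. Thus $\psi$ has order exactly $k$ and $\langle \psi \rangle \cong \mathbb{Z}_k$. For freeness, suppose $\psi^j(v,w) = (v,w)$ for some $j$ with $0 < j < k$; then $\varphi^j(v) = v$, contradicting that $\langle \varphi \rangle$ acts freely on $V(G)$. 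Hence $\langle \psi \rangle$ witnesses that $G \square H$ is $k$-symmetric.

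There is essentially no hard step here: the argument is a routine unwinding of the definitions of the Cartesian product, of an automorphism, and of a free action. The only point that requires any care is ensuring that the order of $\psi$ does not drop below $k$ and that the action remains free — both of which follow at once from the corresponding properties of $\varphi$, since $\psi$ acts on the first coordinate exactly as $\varphi$ and fixes the second coordinate.
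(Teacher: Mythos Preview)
Your proof is correct and follows exactly the approach the paper indicates just before the proposition: take $\psi = \varphi \times \operatorname{id}_H$ and check it is a $k$-symmetric automorphism of $G \square H$. You have simply supplied the routine verifications (automorphism, order $k$, free action) that the paper leaves implicit.
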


Let $G$ be a graph with a $k$-symmetric automorphism $\varphi$.
Then $\mathbb{Z}_k$ acts on $V(G)$ as follows.
For any $i \in \mathbb{Z}_k$ and $v \in V(G)$,
we define $i \cdot v = \varphi^i(v)$.
For any vertex $v$,
the orbit of $v$ is denoted by $\mathbb{Z}_k \cdot v$.
Let $B_{\varphi}$ be a minimal subset of $V(G)$ such that
$$
\bigcup_{i=0}^{k-1} \varphi^{i}(B_{\varphi})=V(G).
$$
Alternatively,
$B_{\varphi}$ is a minimal subset of $V(G)$ such that
$$
\bigcup_{v \in B_{\varphi}} \mathbb{Z}_k \cdot v=V(G).
$$
The set $B_{\varphi}$ is called a {\it base} of $\varphi$.
Since $k$ choices are possible for each orbit,
$B_{\varphi}$ is not unique as drawn in Figure~\ref{fig:petersen}.
Note that the size of the base $B_\varphi$ is $\frac{|V(G)|}{k}$.

Now we introduce how to construct a $k$-symmetric graph from other $k$-symmetric graphs for any positive integer $k$.
First we observe a graph join.
Let $H_1$ and $H_2$ be graphs.
The {\it graph join} $H_1 \! \vee \! H_2$ of $H_1$ and $H_2$
is a graph obtained by joining each vertex of $H_1$ to all vertices of $H_2$.
Since every graph is 1-symmetric with respect to identity map,
we can understand graph join $H_1 \! \vee \! H_2$ as a join of the bases $V(H_1)$ and $V(H_2)$ of $\operatorname{id}_{H_1}$ and $\operatorname{id}_{H_2}$.
From this fact, we generalize graph join.
\begin{definition}\label{def:k-join}
  For $i\in \{1,2\}$,
  let $G_i$ be a $k$-symmetric graph with a $k$-symmetric automorphism $\varphi_i$,
  and let $B_i$ be a chosen base of $\varphi_i$.
  The {\it $k$-symmetric join} is a graph obtained by joining each vertex of $\varphi_1^j(B_1)$ to all vertices of $\varphi_2^j(B_2)$ for all $j \in \mathbb{Z}_k$.
  The $k$-symmetric join is denoted by $(G_1,\varphi_1,B_1) \vee_k (G_2,\varphi_2,B_2)$.
  If we choose arbitrary $k$-symmetric automorphisms and its bases of $G_1$ and $G_2$, then the $k$-symmetric join is simply denoted by $G_1 \vee_k G_2$.
\end{definition}

The $k$-symmetric join preserves the $k$-symmetry.
Because the $k$-symmetric automorphism of $G_1 \vee_k G_2$ is $\varphi_1 + \varphi_2$ and its base is $B_1 \cup B_2$.
Definition~\ref{def:k-join} derives that the graph join is the 1-symmetric join.
Note that, $n$-symmetric joins are not unique even if the base of each $G_i$ is unique.
For instance, the Cartesian product of 5-cycle $C_5$ with $K_2$ and the Petersen graph are both 5-symmetric joins of two 5-cycles, but they are not isomorphic as drawn in Figure~\ref{fig:5-join}.
\begin{figure}[h!]
\centering
\includegraphics[width=0.6\textwidth]{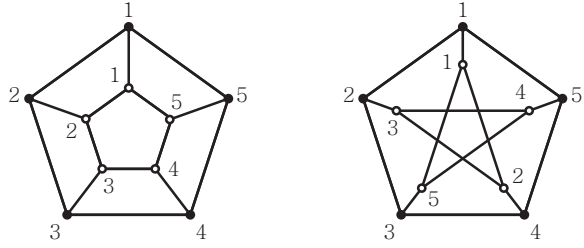}
\caption{The $5$-symmetric join of two 5-cycles is $C_5 \square K_2$ if both automorphisms are $(1,2,3,4,5)$, and the Petersen graph if automorphisms are $(1,2,3,4,5)$ and $(1,3,5,2,4)$}
\label{fig:5-join}
\end{figure}

\section{2-connected $k$-symmetric graphs with Laplacian eigenvalue 1} \label{sec:mult}

In this section,
we prove Theorem~\ref{thm:main}.
First we consider the multiplicity of an integral Laplacian eigenvalue.
Recall that for a given graph $G$, the partition $\pi=(V_1, V_2, \cdots , V_k)$ of $V(G)$ is an {\it equitable partition} if  for all $i,j \in \{1,2,\ldots,k\}$ and for any $v\in V_i$ the number $d_{ij}=|N_G(v)\cap V_j|$ depends only on $i$ and $j$. The $k\times k$ matrix $L^\pi(G)=(b_{ij})$ defined by 
\begin{equation*}
    b_{ij}=\left\{
        \begin{array}{ll}
            -d_{ij}, &\mbox{if $i\neq j$},  \\
            \sum_{s=1}^k d_{is}-d_{ij}, & \mbox{if $i=j$}
        \end{array}
      \right.
  \end{equation*}
is called the {\it divisor matrix} of $G$ with respect to $\pi$.

\begin{lemma}[\cite{Cardoso, Cvetkovic3}]\label{lem:equit}
  Let $G$ be a graph and let $\pi=(V_1,\ldots,V_k)$ be an equitable partition of $G$ with divisor matrix $L^\pi(G)$.
  Then each eigenvalue of $L^\pi(G)$ is also an eigenvalue of $L(G)$.
\end{lemma}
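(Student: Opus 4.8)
The plan is to produce an explicit intertwining relation between $L(G)$ and the divisor matrix $L^\pi(G)$, and then transfer eigenvalues along it. Set $n=|V(G)|$ and introduce the \emph{characteristic matrix} $S=(s_{vi})\in M_{n\times k}(\mathbb{R})$ of $\pi$, defined by $s_{vi}=1$ if $v\in V_i$ and $s_{vi}=0$ otherwise. Its columns are the indicator vectors of the cells $V_1,\ldots,V_k$, which are nonempty and pairwise disjoint, so $S$ has full column rank $k$.

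The core step is to verify the identity $L(G)\,S=S\,L^\pi(G)$. Write $L^\pi(G)=\Delta-D^\pi$, where $D^\pi=(d_{ij})_{i,j=1}^{k}$ and $\Delta$ is the diagonal matrix with $\Delta_{ii}=\sum_{s=1}^{k}d_{is}$. Fix a vertex $v\in V_i$. Because $\pi$ is equitable, the degree of $v$ equals $\sum_{j=1}^{k}|N_G(v)\cap V_j|=\sum_{j}d_{ij}=\Delta_{ii}$, which depends only on the cell containing $v$; this gives $D(G)\,S=S\,\Delta$. Similarly, the $(v,j)$-entry of $A(G)\,S$ is $\sum_{w\in V_j}A(G)_{vw}=|N_G(v)\cap V_j|=d_{ij}$, which is precisely the $(v,j)$-entry of $S\,D^\pi$; hence $A(G)\,S=S\,D^\pi$. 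Subtracting the two identities yields $L(G)\,S=(D(G)-A(G))\,S=S(\Delta-D^\pi)=S\,L^\pi(G)$.

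Finally, I would lift the eigenvalues. If $L^\pi(G)\,x=\mu x$ for some nonzero $x\in\mathbb{R}^k$, then $L(G)(Sx)=S\,L^\pi(G)\,x=\mu(Sx)$, and $Sx\neq 0$ since $S$ has full column rank. Therefore $\mu$ is an eigenvalue of $L(G)$ with eigenvector $Sx$, and the lemma follows.

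I do not anticipate a serious obstacle: the argument is classical. The one point deserving care is the intertwining identity $L(G)\,S=S\,L^\pi(G)$, and in particular the observation that the degree matrix $D(G)$ descends to the quotient. This holds because equitability forces all vertices lying in a common cell to have the same degree, so it comes for free rather than as an extra hypothesis.
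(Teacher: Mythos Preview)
The paper does not supply a proof of this lemma; it is quoted from the cited references \cite{Cardoso,Cvetkovic3} and used as a black box. Your argument is the standard one found in those sources: build the characteristic matrix $S$ of the partition, verify the intertwining relation $L(G)S=S\,L^\pi(G)$ by separately checking $D(G)S=S\Delta$ and $A(G)S=S D^\pi$, and then lift eigenvectors through the injective map $S$. It is correct as written. One cosmetic remark: you take the eigenvector $x$ in $\mathbb{R}^k$, but a priori $L^\pi(G)$ is not symmetric, so its eigenvalues and eigenvectors could be complex; the same computation works verbatim over $\mathbb{C}$, and since $L(G)$ is real symmetric the conclusion then forces every eigenvalue of $L^\pi(G)$ to be real after the fact. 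This is not a gap, just a point worth noting if you want the argument to be self-contained.
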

In the following theorem,
we obtain the multiplicity of $n$ of $k$-symmetric join of graphs where $n$ is the size of a base.

\begin{theorem}\label{thm:multm} 
  Let $G_1,\ldots,G_l$ be $k$-symmetric graphs for some $k$ and let $G = G_1 \vee_k \cdots \vee_k G_l$ be the $k$-symmetric join of $G_1,\ldots ,G_l$. Let $n=\frac{|V(G)|}{k}$. Then
  \begin{equation*}
      m_G(n)\geq l-1.
  \end{equation*}
\end{theorem}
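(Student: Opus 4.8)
The plan is to produce, by hand, an $(l-1)$-dimensional space of Laplacian eigenvectors of $G$ with eigenvalue $n$, consisting of vectors that are constant on each factor $V(G_i)$. Write $n_i = |B_i| = |V(G_i)|/k$, so that $n = n_1 + \cdots + n_l$. Unwinding Definition~\ref{def:k-join} (using that the base of a $k$-symmetric join is the union of the two bases, with $k$-symmetric automorphism the sum of the two automorphisms, so that iterated $k$-symmetric joins are well defined), the combinatorial structure of $G$ is: the vertex set splits as $V(G) = \bigsqcup_{i=1}^{l}\bigsqcup_{j=0}^{k-1}\varphi_i^{\,j}(B_i)$; inside each $V(G_i)$ the edges of $G$ are exactly those of $G_i$; and for $i \neq i'$ the only edges of $G$ between $V(G_i)$ and $V(G_{i'})$ are all pairs in $\varphi_i^{\,j}(B_i)\times\varphi_{i'}^{\,j}(B_{i'})$, one complete bipartite graph per $j \in \mathbb{Z}_k$. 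In particular a vertex $v \in \varphi_i^{\,j}(B_i)$ has exactly $\sum_{i'\neq i}|\varphi_{i'}^{\,j}(B_{i'})| = \sum_{i'\neq i} n_{i'} = n - n_i$ neighbours outside $V(G_i)$, so $\deg_G(v) = \deg_{G_i}(v) + (n-n_i)$.

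Next, fix $(a_1,\dots,a_l) \in \mathbb{R}^{l}$ with $\sum_{i=1}^{l} n_i a_i = 0$ and define $x \in \mathbb{R}^{V(G)}$ by $x_v = a_i$ whenever $v \in V(G_i)$. For $v \in V(G_i)$, its neighbours inside $V(G_i)$ are precisely its $G_i$-neighbours and contribute $\deg_{G_i}(v)\,a_i$ to $\sum_{w\sim v}x_w$, while for each $i' \neq i$ its neighbours inside $V(G_{i'})$ form a full layer $\varphi_{i'}^{\,j}(B_{i'})$ of size $n_{i'}$ and contribute $n_{i'}a_{i'}$. Hence
\begin{align*}
(L(G)x)_v &= \deg_G(v)\,a_i - \sum_{w\sim v}x_w = \bigl(\deg_{G_i}(v)+n-n_i\bigr)a_i - \deg_{G_i}(v)\,a_i - \sum_{i'\neq i}n_{i'}a_{i'}\\
&= (n-n_i)\,a_i - \sum_{i'\neq i}n_{i'}a_{i'} = n a_i - \sum_{i'=1}^{l}n_{i'}a_{i'} = n a_i = n\,x_v,
\end{align*}
so $L(G)x = nx$, i.e. $x$ is an eigenvector for the eigenvalue $n$ (or $x = 0$).

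Finally, the linear map $(a_1,\dots,a_l)\mapsto x$ is injective since $V(G_1),\dots,V(G_l)$ are nonempty and pairwise disjoint, and it carries the subspace $\{(a_i): \sum_i n_i a_i = 0\}$, of dimension $l-1$, into the eigenspace of $L(G)$ for $n$. Therefore $m_G(n) \geq l-1$. The only delicate point is the bookkeeping in the first paragraph: reading off from Definition~\ref{def:k-join} that, for every $v$, the part of its neighbourhood lying in another factor $V(G_{i'})$ is an entire layer $\varphi_{i'}^{\,j}(B_{i'})$ of size exactly $n_{i'}$, independently of which layer $v$ itself sits in — and that iterated joins are association-independent so this holds for $l$ factors. (Equivalently, one can note that the span of the indicator vectors of $V(G_1),\dots,V(G_l)$ is $L(G)$-invariant and that $L(G)$ acts on it by the matrix $(n\delta_{ii'} - n_{i'})_{i,i'=1}^{l}$, which has eigenvalue $n$ of multiplicity $l-1$; this serves as a divisor matrix in the spirit of Lemma~\ref{lem:equit}.)
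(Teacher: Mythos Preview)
Your proof is correct and is essentially the paper's argument carried out by hand: the paper observes that $\pi=(V(G_1),\dots,V(G_l))$ is equitable with divisor matrix $L^\pi(G)=(n\delta_{ii'}-n_{i'})$, computes $\mu(L^\pi(G),x)=x(x-n)^{l-1}$, and invokes Lemma~\ref{lem:equit}, whereas you build the corresponding eigenvectors explicitly (vectors constant on each $V(G_i)$ with $\sum_i n_i a_i=0$) and verify $L(G)x=nx$ directly. Your parenthetical remark at the end is exactly the paper's proof.
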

\begin{proof}
  The partition $\pi=(V(G_1),\ldots,V(G_l))$ is an equitable partition of $G$.
  Then we have
  \begin{equation*}
     L^\pi(G)=\begin{pmatrix}
        n-n_1 & -n_2 & \cdots & -n_l\\
        -n_1 & n-n_2 & \cdots & -n_l \\
        \vdots & \vdots & \ddots & \vdots \\
        -n_1 & -n_2 & \cdots  & n-n_l  
      \end{pmatrix},
  \end{equation*}
  where $n_i=\frac{|V(G_i)|}{k}$ for $i=1,\ldots,l$. 
 Since the characteristic polynomial of $L^\pi(G)$ is $\mu(L^\pi(G),x)=x(x-n)^{l-1}$,
  by Lemma \ref{lem:equit}, we obtain
  \begin{equation*}
      m_G(n) \geq l-1.
  \end{equation*}
\end{proof}

If each $G_i$ in the above theorem is $n$-symmetric graph with $n$ vertices,
then the size of a base of $G$ is $l$.

\begin{corollary}
  Let $G_1,\ldots,G_l$ be $n$-symmetric graphs with $n$ vertices.
  Then for any their $n$-symmetric join $G$,
  \begin{equation*}
      m_G(l) \geq l-1.
  \end{equation*}
\end{corollary}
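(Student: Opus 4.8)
The plan is to deduce this directly from Theorem~\ref{thm:multm}, the only subtlety being a bookkeeping swap of parameters: in Theorem~\ref{thm:multm} the freely acting cyclic group has order $k$ and the symbol $n$ denotes the size of a base of the join, whereas in the corollary each $G_i$ is $n$-symmetric, so it is the \emph{order} of the acting group that equals $n$.

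First I would verify the hypotheses of Theorem~\ref{thm:multm} in the required form. Each $G_i$ is $n$-symmetric with exactly $n$ vertices, so the theorem applies with its integer $k$ taken to be $n$. Because an $n$-symmetric join is built on the disjoint union of the vertex sets of its summands, we have $|V(G)| = \sum_{i=1}^{l} |V(G_i)| = ln$.

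Next I would compute the quantity that Theorem~\ref{thm:multm} calls $n$, namely $|V(G)|/k$: with $k=n$ this is $ln/n = l$. Equivalently, the base of each $\varphi_i$ has size $|V(G_i)|/n = 1$, so the base $B_1 \cup \cdots \cup B_l$ of the $n$-symmetric join $G$ has size $l$, exactly as noted in the remark preceding the corollary. Plugging this into Theorem~\ref{thm:multm} gives $m_G(l) \ge l-1$, which is the claim. (For $l=1$ the statement reads $m_G(1)\ge 0$, trivially true, so no case distinction is needed.)

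Since this is a substitution into an already-proved theorem, there is no genuine obstacle; the one point that needs care is keeping the two integers straight — the $k$ of Theorem~\ref{thm:multm} is the $n$ of the corollary, and the $n$ of Theorem~\ref{thm:multm} (the base size of the join) is the $l$ of the corollary.
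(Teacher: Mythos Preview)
Your proposal is correct and matches the paper's approach exactly: the corollary is stated without proof because it is an immediate specialization of Theorem~\ref{thm:multm}, and you have identified precisely the parameter substitution (the theorem's $k$ is the corollary's $n$, making the base size $|V(G)|/k = ln/n = l$) that the paper signals in the sentence preceding the corollary.
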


Let $G$ be an $n$-symmetric graph with $n$ vertices.
Take an $n$-symmetric automorphism $\varphi$ of $G$.
Let $G'$ be a graph that $n$-symmetric join of $m$ copies of $G$ along $\varphi$.
Then since each base of the copy of $G$ is a vertex,
the base of $G'$ induces the $m$ complete graph $K_m$.
Since $G'$  is constructed by same $n$-symmetric automorphism,
$G'$ becomes the Cartesian product of $G$ and $K_m$.

\begin{corollary}
  Let $G$ be $n$-symmetric graphs with $n$ vertices.
  Then for any positive integer $m$,
  \begin{equation*}
      m_{K_m \square G}(m) \geq m-1.
  \end{equation*}
\end{corollary}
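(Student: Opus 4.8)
The plan is to specialize the immediately preceding corollary to the case $G_1 = \cdots = G_m = G$, all equipped with the \emph{same} $n$-symmetric automorphism $\varphi$. First I would recall from the paragraph just before the statement that forming the $n$-symmetric join of $m$ copies of an $n$-symmetric graph $G$ on $n$ vertices \emph{along a single automorphism} $\varphi$ yields exactly $K_m \square G$. Concretely, since $\varphi$ is $n$-symmetric on $n$ vertices, its base consists of a single vertex, so the base of the $m$-fold join consists of one vertex $b_i$ from each copy; for each $j \in \mathbb{Z}_n$, Definition~\ref{def:k-join} joins $\varphi^j(b_1), \dots, \varphi^j(b_m)$ pairwise, and as $j$ runs over $\mathbb{Z}_n$ the vertex $\varphi^j$ of the base runs over all of $V(G)$ by transitivity. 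Hence for every $v \in V(G)$ the $m$ copies of $v$ across the layers span a $K_m$, and together with the edges inside each copy of $G$ this is precisely the Cartesian product $K_m \square G$.

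Then I would invoke the previous corollary with $l = m$: since $G_1 = \cdots = G_m = G$ are $n$-symmetric graphs on $n$ vertices, any $n$-symmetric join $G'$ of them satisfies $m_{G'}(m) \ge m-1$. Taking the join realized by the common automorphism $\varphi$ gives $G' = K_m \square G$, and therefore $m_{K_m \square G}(m) \ge m-1$. Alternatively, one can bypass the corollary and apply Theorem~\ref{thm:multm} directly with $k = n$, $l = m$, and each $n_i = |V(G_i)|/k = n/n = 1$: the divisor matrix of $K_m \square G$ with respect to the partition into the $m$ copies is $m I_m - J_m$, whose characteristic polynomial is $x(x-m)^{m-1}$, so Lemma~\ref{lem:equit} again yields $m_{K_m \square G}(m) \ge m-1$.

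There is essentially no obstacle here: the only point requiring care is the identification of the $m$-fold $n$-symmetric join along a single automorphism with $K_m \square G$, and this was already established in the discussion preceding the statement, so the corollary is immediate once $l = m$ and $G_i = G$ are substituted. As a sanity check, this is consistent with Fiedler's description of the Laplacian spectrum of a Cartesian product, since $L(K_m)$ has eigenvalue $m$ with multiplicity $m-1$ and $L(G)$ has eigenvalue $0$, so $m$ occurs as a Laplacian eigenvalue of $K_m \square G$ with multiplicity at least $m-1$.
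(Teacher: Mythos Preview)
Your proposal is correct and follows exactly the paper's own route: the paper establishes, in the paragraph immediately preceding the corollary, that the $n$-symmetric join of $m$ copies of $G$ along a single $n$-symmetric automorphism is $K_m \square G$, and then the corollary is read off from the previous corollary with $l=m$. Your additional direct computation of the divisor matrix and the Fiedler sanity check are extra but entirely consistent with the paper's argument.
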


By the \v{S}pacapan's result~\cite{Spacapan} about the connectivity of the Cartesian product in Section~\ref{sec:intro},
we realize that for any positive integer $m$, there is a $m$-connected graph $G$ with $m_G(m) \geq m-1$.

Now consider a special case of $k$-symmetric join.
For any $i \in \{1, \dots , l\}$,
let $G_i$ be a $k$-symmetric graph for some positive integer $k$ and let $\varphi_i$ be an associated $k$-symmetric automorphism.
Let $B^1_i$ be a base of $G_i$,
and let $B^j_i=\varphi^j_i(B^1_i)$.
Racall that the union of $G_1, \dots G_l$ is also $k$-symmetric with the $k$-symmetric automorphism $\varphi_1 + \cdots + \varphi_l$ and the base $B^1_1 \cup \cdots \cup B^1_l$.
Define a graph $G$ by $k$-symmetric joining 
$\overline{K}_k$ and $G_1 \cup \cdots \cup G_l$.
Then the subgraph induced by a base of $G$ has a cut-vertex as drawn in Figure \ref{fig:orb_pro} (a).
From this fact, we can take an equitable partition $\pi=(V_0,V_1,\ldots,V_l)$
where $V_0=V(\overline{K}_k)$ and $V_i=V(G_i)$ for any $i \in \{1, \dots , l\}$ as drawn in Figure \ref{fig:orb_pro} (b).
Remark that for any distinct $i$ and $i'$, there is no edge connecting two subgraphs $G_i$ and $G_{i'}$ in $G$.
To prove Theorem \ref{thm:main}, we need the following two theorems.

\begin{figure}[h!]
\centering
\includegraphics[width=0.8\textwidth]{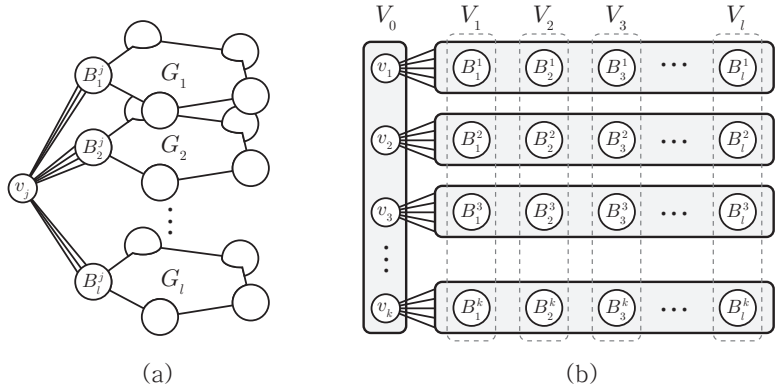}
\caption{$k$-symmetric graph $G=\overline{K}_k \vee_k \left( G_1 \cup \cdots \cup G_l \right)$}
\label{fig:orb_pro}
\end{figure}

\begin{theorem} \label{thm:orbit}
Let $G_1, \dots , G_l$ be $k$-symmetric graphs for some positive integers $l$ and $k$,
and let $G=\overline{K}_k \vee_k \left( G_1 \cup \cdots \cup G_l \right)$.
Then
$$
m_G(1) \ge l-1.
$$
\end{theorem}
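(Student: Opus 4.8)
The plan is to prove the bound by exhibiting $l-1$ explicit, linearly independent eigenvectors of $L(G)$ for the eigenvalue $1$, each supported on the union of two of the "petals" $G_i$. First I would pin down the combinatorial structure of $G$. Write $w_0,\dots,w_{k-1}$ for the vertices of $\overline{K}_k$, with the $k$-symmetric automorphism sending $w_j\mapsto w_{j+1}$ (indices mod $k$) and base $\{w_0\}$; for each $i$ fix a base $B_i$ of $\varphi_i$ and set $B_i^j:=\varphi_i^{\,j}(B_i)$ for $j=0,\dots,k-1$. Since $\varphi_i$ generates a free $\mathbb{Z}_k$-action on $V(G_i)$, every orbit has size $k$, so $B_i^0,\dots,B_i^{k-1}$ partition $V(G_i)$, each of size $n_i:=\tfrac1k|V(G_i)|$. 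By Definition~\ref{def:k-join}, in $G$ the vertex $w_j$ is adjacent to exactly the vertices of $B_1^j\cup\cdots\cup B_l^j$; there is no edge between $V(G_i)$ and $V(G_{i'})$ for $i\ne i'$; and consequently each $v\in V(G_i)$ has exactly one neighbour outside $V(G_i)$, namely the unique $w_j$ with $v\in B_i^j$.

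For $i\in\{2,\dots,l\}$ define $x^{(i)}$ on $V(G)$ by $x^{(i)}\equiv n_i$ on $V(G_1)$, $x^{(i)}\equiv -n_1$ on $V(G_i)$, and $x^{(i)}\equiv 0$ elsewhere; I would then verify $L(G)x^{(i)}=x^{(i)}$ vertex by vertex. On $V(G_m)$ with $m\notin\{1,i\}$ both sides vanish. For $v\in V(G_1)$, using $\bigl(L(G)x^{(i)}\bigr)_v=d_G(v)x^{(i)}_v-\sum_{u\sim v}x^{(i)}_u$ with $d_G(v)=d_{G_1}(v)+1$ and the single outside neighbour contributing $0$, one gets $(d_{G_1}(v)+1)n_i-d_{G_1}(v)n_i=n_i=x^{(i)}_v$ (this only uses that $\mathbf 1_{V(G_1)}\in\ker L(G_1)$, so connectedness of the $G_i$ is irrelevant); symmetrically $\bigl(L(G)x^{(i)}\bigr)_v=-n_1=x^{(i)}_v$ for $v\in V(G_i)$. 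Finally, for $w_j\in V(\overline K_k)$ we have $\bigl(L(G)x^{(i)}\bigr)_{w_j}=-\sum_{u\sim w_j}x^{(i)}_u=-\bigl(|B_1^j|\,n_i+|B_i^j|(-n_1)\bigr)=-(n_1n_i-n_in_1)=0=x^{(i)}_{w_j}$; this cancellation is exactly what dictates the coefficients $n_i$ and $-n_1$.

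It then remains to note that $x^{(2)},\dots,x^{(l)}$ are linearly independent: restricted to $V(G_2)\cup\cdots\cup V(G_l)$, the vector $x^{(i)}$ is the only one among them nonzero on $V(G_i)$, where it equals $-n_1\ne0$. Hence $m_G(1)\ge l-1$. I do not expect a genuine obstacle; the only thing requiring care is the bookkeeping of the $k$-symmetric join, in particular that $|B_i^j|=n_i$ and that each $v\in V(G_i)$ has a single neighbour in $\overline K_k$. One subtlety worth flagging is that the partition $\pi=(V(\overline K_k),V(G_1),\dots,V(G_l))$ is not literally equitable unless every $G_i$ is regular, so Lemma~\ref{lem:equit} cannot be invoked verbatim; however the span of $\mathbf 1_{V(\overline K_k)},\mathbf 1_{V(G_1)},\dots,\mathbf 1_{V(G_l)}$ is still $L(G)$-invariant, and a Schur-complement computation of the type in Section~\ref{sec:pre} shows $L(G)$ acts on it with characteristic polynomial $x(x-1)^{l-1}\bigl(x-(n_1+\cdots+n_l)-1\bigr)$, which reproves the bound and shows it is tight on this subspace.
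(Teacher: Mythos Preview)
Your argument is correct: the vectors $x^{(i)}$ are genuine eigenvectors of $L(G)$ for the eigenvalue $1$, and the verification you outline goes through exactly as written. The paper takes a different but closely related route: it declares $\pi=(V(\overline{K}_k),V(G_1),\dots,V(G_l))$ to be an equitable partition, writes down the divisor matrix $L^\pi(G)$, computes its characteristic polynomial as $x(x-1)^{l-1}(x-(n+1))$ via a $1\times l$ Schur complement, and then invokes Lemma~\ref{lem:equit}. Your explicit eigenvectors are precisely the $1$-eigenvectors of $L(G)$ restricted to the span of the indicator functions $\mathbf 1_{V_0},\mathbf 1_{V_1},\dots,\mathbf 1_{V_l}$, so the two proofs are really computing the same thing; yours is more hands-on and avoids quoting the divisor-matrix lemma, while the paper's packages the linear algebra into a single small matrix. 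Your remark about equitability is well taken: strictly speaking $\pi$ is only \emph{almost} equitable in the sense of Cardoso--Delorme--Rama (the within-cell degrees $d_{ii}$ need not be constant unless each $G_i$ is regular), but the divisor matrix as defined in the paper only uses the off-diagonal data $d_{ij}$ for $i\ne j$, and the cited reference covers precisely this situation, so the paper's argument still goes through. Your direct construction sidesteps this terminological wrinkle entirely, which is a small advantage in clarity.
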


\begin{proof}
  Suppose that $G_1, \dots , G_l$ and $G$ are the graphs in the statement of the theorem.
  Let  $V_0$ be the vertices set of $\overline{K}_k$ and let $V_i$ be the vertices set of $G_i$ for $i=1,\ldots, l$.
  Our observation implies that the partition $\pi=(V_0,V_1,\ldots,V_l)$ is an equitable partition of $G$.
  Then the divisor matrix $L^\pi(G)$ is equal to
  \begin{equation*}
       \left(\begin{array}{ccccc}
            n  & -n_1 & -n_2 & \cdots & -n_l \\
            -1 & 1 & 0  & \cdots & 0 \\
            -1 & 0 & 1 & \cdots & 0 \\              
            \vdots & \vdots & \vdots & \ddots & \vdots \\
            -1 & 0 & 0 & \cdots & 1 \\
             \end{array}\right),
   \end{equation*}
  where $n_i=\frac{|V_i|}{k}$ for $i=1,\ldots, l$ and $n=\sum_{i=1}^l n_i$. 
  We can partition the matrix $xI-L^\pi(G)$ into four blocks as
  \begin{equation*}
       \left(\begin{array}{c:cccc}
                    x-n  & n_1 & n_2 & \cdots & n_l \\ \hdashline
                    1 & x - 1 & 0 & \cdots & 0 \\
                    1 & 0 & x -1 & \cdots & 0 \\
                    \vdots & \vdots & \vdots & \ddots & \vdots \\
                    1 & 0 & 0 & \cdots & x -1 \\
                \end{array}\right).
  \end{equation*}
  Then the characteristic polynomial of $L^\pi(G)$ is
  \begin{align*}
    \mu(L^\pi(G),x)
        &=(x-1)^l\bigg((x-n)-\frac{1}{x-1}n\bigg)\\
        &=(x-1)^{l-1}((x-n)(x-1)-n)\\ 
        &=(x-1)^{l-1}x(x-(n+1)).
  \end{align*}
  By Lemma \ref{lem:equit}, we obtain $m_G(1) \ge l-1$.
\end{proof}

\begin{theorem} \label{thm:2-conn}
  Let $G_1, \dots , G_l$ be connected $k$-symmetric graphs for some integers $l, k \geq 2$.
  Then the graph $G=\overline{K}_k \vee_k \left( G_1 \cup \cdots \cup G_l \right)$ is a 2-connected prime graph with respect to Cartesian product.
\end{theorem}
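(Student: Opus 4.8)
The plan is to prove the two assertions separately. Write $U=V(\overline{K}_k)=\{u_0,\dots,u_{k-1}\}$; by construction $U$ is an independent set, the vertex $u_j$ is adjacent exactly to $\bigcup_{i=1}^{l}\varphi_i^j(B_i)$, and there is no edge of $G$ joining $V(G_i)$ to $V(G_{i'})$ when $i\neq i'$. Since the $\mathbb{Z}_k$-action on $V(G_i)$ is free, the slices $\varphi_i^0(B_i),\dots,\varphi_i^{k-1}(B_i)$ partition $V(G_i)$ into nonempty sets; in particular every vertex of $G_i$ has exactly one neighbour in $U$, namely $u_m$ where $m$ is the index of the slice containing it. I will use these facts throughout.

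For $2$-connectedness I would first note that $G$ is connected: each $G_i$ is connected and every $u_j$ has a neighbour in $V(G_i)$, so the subgraph induced on $U\cup V(G_i)$ is connected, and all these subgraphs share $U$. To exclude a cut vertex, consider deleting one vertex. If we delete some $u_j$, then $U\setminus\{u_j\}\neq\emptyset$ because $k\ge 2$, each remaining $u_{j'}$ still has a neighbour in every (untouched) $G_i$, and the argument just given shows $G-u_j$ is connected. If we delete $v\in V(G_i)$, then because $l\ge 2$ there is some $i'\neq i$, the subgraph on $U\cup\bigcup_{i'\neq i}V(G_{i'})$ is connected, and every vertex of $G_i-v$ still has a surviving neighbour in $U$, so $G-v$ is connected. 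Since $|V(G)|=k\bigl(1+\sum_i n_i\bigr)\ge 6$, this yields $\kappa(G)\ge 2$.

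For primeness, suppose toward a contradiction that $G\cong G_A\,\square\,G_B$ with $|V(G_A)|,|V(G_B)|\ge 2$; since $G$ is connected, both $G_A$ and $G_B$ are connected. I would use the following elementary property of a Cartesian product of two connected nontrivial graphs: for every vertex $v$ the neighbourhood splits as $N(v)=N_A(v)\sqcup N_B(v)$ into two nonempty parts (the two fibre directions through $v$) with no edge between them, and for every $x\in N_A(v)$ and $y\in N_B(v)$ the opposite corner vertex $w$ satisfies $w\sim x$, $w\sim y$, $w\neq v$ and $w\not\sim v$. Apply this at $v=u_j$. The induced subgraph $G[N(u_j)]$ is the disjoint union $\bigsqcup_i G_i[\varphi_i^j(B_i)]$, so each of its connected components lies inside a single slice $\varphi_i^j(B_i)$; since $N_A(u_j)$ and $N_B(u_j)$ are nonempty with no edge between them, each such component lies entirely in one of the two parts. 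Because every slice $\varphi_i^j(B_i)$ is nonempty and $l\ge 2$, a short counting argument on the index sets $I_A=\{i:N_A(u_j)\cap\varphi_i^j(B_i)\neq\emptyset\}$ and $I_B$ (which satisfy $I_A\cup I_B=\{1,\dots,l\}$ and $I_A,I_B\neq\emptyset$) produces $i\in I_A$ and $i'\in I_B$ with $i\neq i'$. Choosing $x\in N_A(u_j)\cap\varphi_i^j(B_i)$ and $y\in N_B(u_j)\cap\varphi_{i'}^j(B_{i'})$, the corner vertex $w$ is a common neighbour of $x\in V(G_i)$ and $y\in V(G_{i'})$ with $w\neq u_j$ and $w\not\sim u_j$. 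But a common neighbour of a vertex of $G_i$ and a vertex of $G_{i'}$ with $i\neq i'$ must lie in $U$ (those subgraphs are joined only through $U$), and the unique element of $U$ adjacent to $x$ is $u_j$; hence $w=u_j$, a contradiction. Therefore $G$ has no nontrivial Cartesian product decomposition.

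The $2$-connectedness casework and the coordinate check of the local product property are routine. The one step that needs care is showing that the bipartition $N(u_j)=N_A(u_j)\sqcup N_B(u_j)$ cannot be confined to a single $G_i$, which is exactly what forces the cross pair $i\neq i'$; this is the precise point at which independence of $U$ together with $l\ge 2$ obstructs any product structure, so I would isolate it as a small lemma about the index sets $I_A$ and $I_B$.
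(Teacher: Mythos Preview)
Your proof is correct and follows the same two–part outline as the paper, but your execution is tighter on both halves and is worth noting.

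For $2$-connectedness, the paper argues that every vertex lies on a cycle and concludes it cannot be a cut vertex; that implication is not valid in general (two triangles sharing a vertex is a counterexample), whereas your direct verification that $G-v$ remains connected, using that each surviving vertex of $G_i$ retains its unique neighbour in $U$ and that $U\cup\bigcup_{i'\neq i}V(G_{i'})$ is already connected since $l\ge 2$, is airtight.

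For primeness, the paper invokes the claim that \emph{any} two incident edges of a nontrivial Cartesian product lie in a common $C_4$; this is false as stated (take the two edges in the $P_3$-fibre at the middle vertex of $P_3\,\square\,K_2$). The correct statement is the one you use: an $A$-fibre edge and a $B$-fibre edge meeting at $v$ complete to a $4$-cycle whose opposite corner $w$ satisfies $w\sim x$, $w\sim y$, $w\neq v$, $w\not\sim v$. Because this weaker fact only applies to edges in \emph{different} fibre directions, you need---and supply---the extra step that the bipartition $N_A(u_j)\sqcup N_B(u_j)$ cannot be confined to a single $G_i$; your $I_A$, $I_B$ counting argument (nonempty sets with $I_A\cup I_B=\{1,\dots,l\}$ and $l\ge 2$ always admit a cross pair $i\neq i'$) is exactly what fills this gap. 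Isolating that step as a small lemma, as you propose, is a good idea.
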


\begin{proof}
  First we prove that the graph $G=\overline{K}_k \vee_k \left( G_1 \cup \cdots \cup G_l \right)$ is 2-connected.
  Suppose that there is a cut-vertex $v$ of $G$ in $\overline{K}_k$.
  Since $k \geq 2$,
  there is another vertex $w$ in $\overline{K}_k$.
  Since $l \geq 2$,
  there are two independent paths $P_1$ and $P_2$ from $v$ to $w$ passing through $G_1$ and $G_2$, respectively.
  This implies that $v$ lies on the cycle $P_1 \cup P_2$,
  and hence $v$ is not a cut-vertex.
  Next we suppose that a cut-vertex of $G$ is not lying on $\overline{K}_k$.
  Without loss of generality, assume that a cut-vertex $v$ is in $G_1$.
  Since $l, k \geq 2$ and $G$ is connected,
  there is a cycle containing $v$ in $G$.
  It follows that $v$ is not a cut-vertex.
  Therefore, $G$ is 2-connected.
  
  Now, we show that the graph $G$ is prime with respect to the Cartesian product.
  It is well-known that if two edges of a nontrivial Cartesian product are incident,
  then they are included in a subgraph $C_4$ of the Cartesian product.
  For any vertex $u$ of $G$ in $\overline{K}_k$,
  two incident edges $e$ and $f$ of $u$
  such that the endpoints of $e$ and $f$ are contained in different graphs $G_i$ and $G_j$ for some integers $i$ and $j$.
  But there is no $C_4$ including $e$ and $f$.
  Thus we deduce that $G$ is prime.
\end{proof}

Theorems~\ref{thm:orbit} and \ref{thm:2-conn} imply Theorem~\ref{thm:main}.
Note that, if one of the graphs $G_1, \ldots, G_l$ is disconnected, then there is a counterexample.
For example, if $G_1$ is $\overline{K}_k$,
then $G=\overline{K}_k \vee_k \left( G_1 \cup \cdots \cup G_l \right)$ has a cut vertex.

\section{Laplacian integral graphs} \label{sec:laplacian}

In this section, we discuss $k$-symmetric graphs with integral Laplacian spectrum.
Let $n$ and $m$ be positive integers. 
Since the $n$-complete graph $K_n$ is $n$-symmetric, the disjoint union of $m$ copies of $K_n$, denoted by $mK_n$, is also $n$-symmetric.
We consider the $n$-symmetric join of $\overline{K}_n$ and $mK_n$.
Denote the graph $\overline{K}_n \vee_n mK_n$ by $\mathcal{C}(n,m)$.
Now we observe that a graph $C(n,m)$ is not a cograph for $n\geq 2$. 
Let $v$ and $w$ be vertices in $\overline{K}_n$.
Then there are two vertices in $K_n$ which are adjacent to $v$ and $w$, respectively.
Thus the graph $C(n,m)$ contains the path $P_4$ as an induced subgraph. 
We will show that a graph $\mathcal{C}(n,m)$ is Laplacian integral for some positive integers $n$ and $m$.
In the following theorem, we give the characteristic polynomial of $L(\mathcal{C}(n,m))$.
\begin{theorem}\label{thm:char}
  Let $n$ and $m$ be positive integers. Then the characteristic polynomial of $L(\mathcal{C}(n,m))$ is 
  \begin{equation*}
      x(x-1)^{m-1}(x-(n+1))^{(m-1)(n-1)}(x-(m+1))(x^2-(m+n+1)x+mn)^{n-1}.
  \end{equation*}
\end{theorem}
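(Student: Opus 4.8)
The plan is to exploit the $n$-symmetric structure of $\mathcal{C}(n,m) = \overline{K}_n \vee_n mK_n$ to decompose the Laplacian spectrum into two pieces: the eigenvalues coming from the equitable partition (the "symmetric" part), and the eigenvalues on the orthogonal complement of the partition subspace. First I would record the explicit form of $L(\mathcal{C}(n,m))$ under a suitable vertex ordering. Label the $n$ vertices of $\overline{K}_n$ as $u_1,\dots,u_n$, and the $mn$ vertices of $mK_n$ by their base position $j \in \{1,\dots,n\}$ and copy index $t \in \{1,\dots,m\}$; vertex $(j,t)$ is joined to $u_j$ and to all $(j',t)$ with $j' \neq j$. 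Thus each $u_j$ has degree $m$, and each $(j,t)$ has degree $(n-1)+1 = n$. Grouping by base position, $L(\mathcal{C}(n,m))$ becomes an $n \times n$ block matrix whose $(j,j')$ block is an $(m+1)\times(m+1)$ matrix; by the cyclic symmetry the whole thing is block-circulant, so after a Kronecker/Fourier change of basis it block-diagonalizes into $n$ smaller matrices indexed by $n$-th roots of unity.

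Next I would carry out that block-diagonalization concretely. Write $L(\mathcal{C}(n,m)) = I_n \otimes D_0 + (J_n - I_n) \otimes E$ for suitable small matrices $D_0, E \in M_{m+1}$, where $D_0$ encodes the "within one base position" part ($u_j$ attached to its star of $m$ leaves, each leaf also carrying its degree contribution) and $E$ encodes the all-ones coupling between leaves at different base positions (an $E$ supported on the leaf–leaf block). Diagonalizing $J_n$ with eigenvalues $n$ (once) and $0$ ($n-1$ times), the spectrum of $L(\mathcal{C}(n,m))$ is the union of the spectrum of $D_0 + n E$ (once) with the spectrum of $D_0 - E$ (with multiplicity $n-1$). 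The first matrix $D_0 + nE$ governs the equitable-partition eigenvalues; its characteristic polynomial should be computable by hand using Corollary~\ref{coro:detinv} (it is an $aI + bJ$-type matrix on the leaf block bordered by one row/column), and I expect it to yield the factor $x(x-1)^{m-1}(x-(m+1))$ together with $(x-(n+1))$ appearing once. The second matrix $D_0 - E$, taken with multiplicity $n-1$, must then account for $(x-1)^{m-1}\cdot$[wait—rechecking exponents] — more precisely it should contribute $(x-(n+1))^{m-1}$ and the quadratic $x^2-(m+n+1)x+mn$, and raising its characteristic polynomial to the power $n-1$ produces the exponents $(m-1)(n-1)$ and $n-1$ in the claimed formula. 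I would verify the total degree is $n + mn = n(m+1)$ and cross-check a small case such as $(n,m)=(2,2)$ to be sure the bookkeeping of multiplicities is right.

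The computational heart is evaluating $\mu(D_0 \pm (\text{scalar})E, x)$ for the two border-plus-$aI+bJ$ matrices of size $m+1$. For $D_0 + nE$ I would use the Schur-complement determinant formula $\det A = \det A_{22} \det(A_{11} - A_{12} A_{22}^{-1} A_{21})$ stated in Section~\ref{sec:pre}, taking $A_{22}$ to be the leaf–leaf block (an invertible $aI_m + bJ_m$, inverted via Corollary~\ref{coro:detinv}), leaving a $1\times 1$ Schur complement; this directly gives the factorization with the $(x-(m+1))$ and $x$ roots. The same method applies to $D_0 - E$, where the bordered scalar entry is $x-0$ (the vertex $u_j$ contributes differently) producing the quadratic factor. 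The main obstacle I anticipate is purely organizational: getting the two small matrices $D_0$ and $E$ exactly right (in particular the diagonal degree entries and which off-diagonal $-1$'s survive in $D_0 - E$ versus $D_0 + nE$), and then correctly matching multiplicities so that the "once" eigenvalues from $D_0 + nE$ and the "$(n-1)$-fold" eigenvalues from $D_0 - E$ combine into exactly the stated exponents $m-1$, $(m-1)(n-1)$, $1$, and $n-1$. Once the block structure is pinned down, the determinant evaluations are routine applications of the Schur complement and Corollary~\ref{coro:detinv}.
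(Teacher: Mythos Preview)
Your approach is correct and takes a genuinely different route from the paper. The paper partitions the vertex set as $V(\overline{K}_n)\cup V(mK_n)$, obtaining a $2\times 2$ block structure with blocks of sizes $n$ and $mn$; it then takes the Schur complement with respect to the $mn\times mn$ block $I_m\otimes\big((x-1)I_n-L(K_n)\big)$, inverts that block via Corollary~\ref{coro:detinv}(b), and is left with an $n\times n$ determinant of $aI_n+bJ_n$ type handled by Corollary~\ref{coro:detinv}(a). You instead group vertices by base position (orbit under the $n$-symmetry), producing an $n\times n$ block-circulant matrix with blocks of size $m+1$, and diagonalize along the $J_n$ factor to reduce to two bordered $(m+1)\times(m+1)$ determinants. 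Your method exploits the $k$-symmetric structure of the paper more directly and makes the exponent $n-1$ transparent as the multiplicity of the nontrivial eigenspace of $J_n$; the paper's method stays with one Schur step at the cost of inverting a larger (though highly structured) block. One minor bookkeeping fix: from $L=I_n\otimes D_0+(J_n-I_n)\otimes E$ the two reduced pieces are $D_0+(n-1)E$ and $D_0-E$, not $D_0+nE$ and $D_0-E$; this only shifts a diagonal entry, and the factorizations you anticipate, namely $x(x-1)^{m-1}(x-(m+1))$ for the symmetric piece and $(x-(n+1))^{m-1}\big(x^2-(m+n+1)x+mn\big)$ for the other, come out exactly as stated once the Schur complement is applied to each.
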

\begin{proof}
  The Laplacian matrix of $\mathcal{C}(n,m)$ is
  \begin{equation*}
      L(\mathcal{C}(n,m))=
      \begin{pmatrix}
          mI_n+L(\overline{K}_n) &  -1_m^T \otimes I_n\\
          -1_m \otimes I_n & I_m \otimes (I_n+L(K_n))
      \end{pmatrix}.
  \end{equation*}
  We consider $xI_{n(m+1)}-L(\mathcal{C}(n,m))$ as a matrix over the field of rational functions $\mathbb{C}(x)$.
  Then the characteristic polynomial  of $L(\mathcal{C}(n,m))$ is
  \begin{align*}
      \mu(L(\mathcal{C}(n,m)),x)=&\det(xI_{n(m+1)}-L(\mathcal{C}(n,m)))\\
      =&\det
      \begin{pmatrix}
          (x-m)I_n &  1_m^T \otimes I_n\\
          1_m \otimes I_n & I_m \otimes ((x-1)I_n-L(K_n))
      \end{pmatrix}\\
      =&\det\big(I_m \otimes ((x-1)I_n-L(K_n))\big)\det\big((x-m)I_n\\
      &~~-(1_m^T \otimes I_n)(I_m \otimes ((x-1)I_n-L(K_n)))^{-1}(1_m \otimes I_n)\big)\\
      =&\det((x-1)I_n-L(K_n))^m\det\big((x-m)I_n\\
      &~~-m((x-1)I_n-L(K_n))^{-1}\big).
  \end{align*}
  Since $\det(xI_n-L(K_n))=\mu(L(K_n),x)$, we obtain 
  \begin{align*}
      \det((x-1)I_n-L(K_n))^m&=\mu(L(K_n),x-1)^m\\
      &=(x-1)^m(x-(n+1))^{m(n-1)}.
  \end{align*}
   Now, we compute the determinant of $(x-m)I_n-m((x-1)I_n-L(K_n))^{-1}.$
   By Corollary \ref{coro:detinv} (b), we have
  \begin{align*}
      \big((x-1)I_n-L(K_n)\big)^{-1}&=\big((x-(n+1))I_n+J_n\big)^{-1}\\
      &=\frac{1}{(x-1)(x-(n+1))}\big((x-1)I_n-J_n\big).
  \end{align*}
  This implies that
  \begin{align*}
      &~\det\big((x-m)I_n-m((x-1)I_n-L(K_n))^{-1}\big)\\
      =&~ \frac{\det\big((x-m)(x-1)(x-(n+1))I_n-m((x-1)I_n-J_n)\big)}{(x-1)^n(x-(n+1))^n}\\
      =&~\frac{\det\big((x^3-(m+n+2)x^2+(mn+m+n+1)x-mn)I_n+mJ_n\big)}{(x-1)^n(x-(n+1))^n}
  \end{align*}
  By Corollary \ref{coro:detinv} (a), we have
  \begin{align*}
        &\det\big((x^3-(m+n+2)x^2+(mn+m+n+1)x-mn)I_n+mJ_n\big)\\
      =&~x(x-(m+1))(x-(n+1))(x-1)^{n-1}(x^2-(m+n+1)x+mn)^{n-1}.
  \end{align*}
  Hence the determinant of $(x-m)I_n-m((x-1)I_n-L(K_n))^{-1}$ is
  \begin{equation*}
      \frac{x(x-(m+1))(x^2-(m+n+1)x+mn)^{n-1}}{(x-(n+1))^{n-1}(x-1)}.
  \end{equation*}
  Therefore the characteristic polynomial of $L(\mathcal{C}(n,m))$ is
   \begin{multline*}
      x(x-1)^{m-1}(x-(n+1))^{(m-1)(n-1)}(x-(m+1))(x^2-(m+n+1)x+mn)^{n-1}.
  \end{multline*}
\end{proof}

The following corollary induces Theorem~\ref{thm:main2}.
\begin{corollary}
  Let $n$, $m$, $k$, and $l$ be positive integers with $l\neq 1$. Then
  \begin{itemize}
      \item[(a)] If $\mathcal{C}(n,m)$ is Laplacian integral, then $\mathcal{C}(m,n)$ is also Laplacian integral.
      \item[(b)] A graph $\mathcal{C}(kl,(k+1)(l-1))$ is Laplacian integral.
      \item[(c)] A graph $\mathcal{C}(k^2+k,k^2+k)$ is regular Laplacian integral.
  \end{itemize}
\end{corollary}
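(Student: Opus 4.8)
The plan is to extract everything from the explicit characteristic polynomial given by Theorem~\ref{thm:char},
\[
\mu(L(\mathcal{C}(n,m)),x)=x(x-1)^{m-1}(x-(n+1))^{(m-1)(n-1)}(x-(m+1))\bigl(x^{2}-(m+n+1)x+mn\bigr)^{n-1}.
\]
Every linear factor already contributes an integer eigenvalue ($0$, $1$, $n+1$, $m+1$), so --- at least when $n\ge 2$, so that the quadratic factor really occurs --- $\mathcal{C}(n,m)$ is Laplacian integral if and only if $q_{n,m}(x)=x^{2}-(m+n+1)x+mn$ splits over $\mathbb{Z}$, equivalently if and only if $(m+n+1)^{2}-4mn=(m-n)^{2}+2(m+n)+1$ is a perfect square (the two roots then automatically having the right parity). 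When $n=1$, $\mathcal{C}(1,m)=K_{1,m}$ is a star and is trivially Laplacian integral, so I would state and prove part~(a) for $n,m\ge 2$, the boundary cases being checked directly. Part~(a) is then immediate, since the quadratic $q_{n,m}$ and the set $\{0,1,n+1,m+1\}$ of roots of the linear factors are each invariant under swapping $m$ and $n$; hence the set of distinct Laplacian eigenvalues of $\mathcal{C}(n,m)$ lies in $\mathbb{Z}$ exactly when that of $\mathcal{C}(m,n)$ does.

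For part~(b) I would just display the factorization of the relevant quadratic. With $n=kl$ and $m=(k+1)(l-1)$ one computes $m+n+1=2kl+l-k$ and $mn=k(k+1)l(l-1)$, and then
\[
q_{n,m}(x)=\bigl(x-(k+1)l\bigr)\bigl(x-k(l-1)\bigr),
\]
which I would verify from $(k+1)l+k(l-1)=2kl+l-k$ and $(k+1)l\cdot k(l-1)=k(k+1)l(l-1)$. The roots $(k+1)l$ and $k(l-1)$ are positive integers, and the hypothesis $l\ne 1$ ensures that $m=(k+1)(l-1)\ge 1$ is a genuine positive parameter and that $n=kl\ge 2$, so the quadratic factor is present. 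Hence $\mathcal{C}(kl,(k+1)(l-1))$ is Laplacian integral, which is exactly Theorem~\ref{thm:main2}.

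For part~(c) I would observe that $\mathcal{C}(k^{2}+k,\,k^{2}+k)=\mathcal{C}\bigl(k(k+1),\,(k+1)((k+1)-1)\bigr)$, i.e.\ it is the graph of part~(b) with the pair $(k,k+1)$ in place of $(k,l)$, and hence Laplacian integral (its quadratic factor has roots $k^{2}$ and $(k+1)^{2}$). Regularity is a structural observation about $\mathcal{C}(n,m)=\overline{K}_{n}\vee_{n}(mK_{n})$: every vertex of $\overline{K}_{n}$ is joined to exactly one vertex in each of the $m$ copies of $K_{n}$ and so has degree $m$, whereas every vertex of $mK_{n}$ has degree $(n-1)+1=n$; thus $\mathcal{C}(n,m)$ is regular if and only if $m=n$, which is the case here with $m=n=k^{2}+k$.

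There is no genuinely difficult step: once Theorem~\ref{thm:char} is in hand, each part is a short manipulation of the one quadratic factor. The only places that need care are spotting the factorization $q_{n,m}(x)=\bigl(x-(k+1)l\bigr)\bigl(x-k(l-1)\bigr)$ in part~(b), and making sure that the degenerate parameter values $n=1$ or $m=1$ are not silently invoked in part~(a).
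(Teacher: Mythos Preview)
Your argument is correct and follows the same overall strategy as the paper: reduce everything to Theorem~\ref{thm:char} and analyze the single quadratic factor $q_{n,m}(x)=x^{2}-(m+n+1)x+mn$. Parts~(a) and~(c) match the paper's almost word for word (the paper's~(c) simply says ``if $m=n$ then $\mathcal{C}(n,m)$ is regular; now apply~(b)'', without your explicit degree count, but the content is identical).

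The only noticeable difference is in part~(b). You verify the factorization $q_{kl,(k+1)(l-1)}(x)=(x-(k+1)l)(x-k(l-1))$ directly by checking sum and product of the asserted roots. The paper instead writes a short \emph{derivation}: it posits $n=kl$, $m=rs$ with roots $kr$ and $ls$, applies Vieta to get $(s-k)(r-l)=-1$, and concludes $s=k+1$, $r=l-1$. Your approach is shorter and entirely sufficient as a proof of the stated corollary; the paper's approach has the advantage of explaining \emph{how} one might have discovered the parametrization $(kl,(k+1)(l-1))$ in the first place. Your added care with the degenerate cases $n=1$ or $m=1$ (where the exponent $n-1$ on the quadratic vanishes) is a nice touch that the paper omits.
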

\begin{proof}
 \begin{itemize}
     \item[(a)] It is obvious by Theorem \ref{thm:char}.
     \item[(b)] If the quadratic $x^2-(m+n+1)x+mn$ has two integer roots, then $\mathcal{C}(m,n)$ is Laplacian integral, by Theorem \ref{thm:char}.
    Let $k$, $l$, $r$ and $s$ be positive integers with $n=kl$ and $m=rs$.
    Suppose that $kr$ and $ls$ are roots of the quadratic. 
    Then, by Vieta's formulas, we have $kr+ls=rs+kl+1$, that is,
    \begin{equation}\label{eq:rskl}
       (s-k)r-(s-k)l+1=0. 
    \end{equation}
    If $s=k$ then it is a contradiction.
    If $s-k \neq 0$, then $r-l+\frac{1}{s-k}=0$. 
    Since $r$ and $l$ are integers, $s-k$ must be 1. 
    Plugging $s=k+1$ into the equation (\ref{eq:rskl}), we have $r=l-1$. 
    Since $m$ is a positive integer, $l$ is not equal to $1$.
    Thus $\mathcal{C}(kl,(k+1)(l-1))$ is Laplacian integral for any positive integers $k$ and $l\neq 1$.
    
    \item[(c)] If $m=n$, then $\mathcal{C}(n,m)$ is regular. By (b), a graph $\mathcal{C}(k^2+k,k^2+k)$ is regular Laplacian integral graph.
 \end{itemize}
\end{proof}

Now, we consider the $n$-complete graph $K_n$ as a $k$-symmetric graph for some divisor $k$ of $n$. 
Note that a base of $K_n$ as a $k$-symmetric graph is not unique, but the $k$-symmetric join of $\overline{K}_k$ and $mK_n$ is unique up to isomorphism.  
We denote by $\mathcal{C}(n,k,m)$ the graph $\overline{K}_k \vee_k mK_n$.
In the similar way to the proof of Theorem~\ref{thm:char}, we get the characteristic polynomial of $L(\mathcal{C}(n,k,m))$.

\begin{theorem}\label{thm:char-k}
  Let $n$ and $m$ be positive integers. Let $k$ be a divisor of $n$ and let $d=n/k$.
  Then the characteristic polynomial of $L(\mathcal{C}(n,k,m))$ is 
  \begin{equation*}
      x(x-1)^{m-1}(x-(n+1))^{m(n-1)-k+1}(x-(md+1))(x^2-(md+n+1)x+mdn)^{k-1}.
  \end{equation*}
\end{theorem}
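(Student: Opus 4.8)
The plan is to follow the proof of Theorem~\ref{thm:char} almost verbatim: the case $k=n$ (so $d=1$) of the present statement \emph{is} Theorem~\ref{thm:char}, and the general case amounts to carrying the extra parameter $d$ through the same computation. First I would write down $L(\mathcal{C}(n,k,m))$ in block form. Since, as noted just before the statement, the graph $\overline{K}_k \vee_k mK_n$ is independent of the chosen base up to isomorphism, I may take the $k$-symmetric automorphism of each copy of $K_n$ to be a product of $d$ disjoint $k$-cycles and let its base consist of one vertex from each cycle; labelling the $mn$ vertices of $mK_n$ by (copy, cycle, cyclic position) and the $k$ vertices of $\overline{K}_k$ so that the $j$-th one is joined precisely to the $md$ base vertices lying in the $j$-th cyclic slice, I get, with $I_n = I_k \otimes I_d$ and $J_n = J_k \otimes J_d$ understood in the lower-right block,
\begin{equation*}
L(\mathcal{C}(n,k,m)) = \begin{pmatrix} md\, I_k & -\,1_m^T \otimes I_k \otimes 1_d^T \\ -\,1_m \otimes I_k \otimes 1_d & I_m \otimes \big(I_n + L(K_n)\big) \end{pmatrix}.
\end{equation*}
Getting this block form right is the step that needs the most care: one must check that every vertex of $\overline{K}_k$ has degree $md$, every vertex of $mK_n$ has degree $n$, and that the off-diagonal block has exactly the displayed Kronecker shape (up to an irrelevant column permutation). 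When $d=1$ this is precisely the matrix used in the proof of Theorem~\ref{thm:char}.

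Next I would treat $xI_{k+mn} - L(\mathcal{C}(n,k,m))$ as a matrix over the field $\mathbb{C}(x)$ and apply the Schur-complement determinant identity with respect to the lower-right block $I_m \otimes \big((x-1)I_n - L(K_n)\big)$, which is invertible over $\mathbb{C}(x)$. Its determinant contributes the factor $\mu(L(K_n),x-1)^m = (x-1)^m(x-(n+1))^{m(n-1)}$, exactly as before. For the Schur complement I would use the Kronecker-product rules (c), (d) together with the scalar identities $1_m^T 1_m = m$, $1_d^T 1_d = d$, $1_d^T J_d 1_d = d^2$, and Corollary~\ref{coro:detinv}(b) applied to $(x-1)I_n - L(K_n) = (x-(n+1))I_n + J_n$, to collapse it to the $k \times k$ matrix
\begin{equation*}
(x-md)I_k - \frac{md\big((x-1)I_k - dJ_k\big)}{(x-1)(x-(n+1))} = \frac{P(x)\,I_k + md^2 J_k}{(x-1)(x-(n+1))}, \qquad P(x) = (x-1)\big(x^2-(md+n+1)x+mdn\big).
\end{equation*}

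Finally, Corollary~\ref{coro:detinv}(a) gives $\det\big(P(x)I_k + md^2 J_k\big) = P(x)^{k-1}\big(P(x)+k\,md^2\big)$; since $kd=n$ we have $k\,md^2 = mdn$, and $P(x)+mdn = x\,(x-(md+1))(x-(n+1))$, which is the same factorization that appears in the proof of Theorem~\ref{thm:char} with $m$ replaced by $md$. Dividing by $(x-1)^k(x-(n+1))^k$, multiplying back the factor $(x-1)^m(x-(n+1))^{m(n-1)}$, and cancelling the common factors then yields
\begin{equation*}
x(x-1)^{m-1}(x-(n+1))^{m(n-1)-k+1}(x-(md+1))\big(x^2-(md+n+1)x+mdn\big)^{k-1},
\end{equation*}
as claimed. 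Beyond the block-matrix setup, the only genuinely new work compared to Theorem~\ref{thm:char} is the contraction of the Kronecker expression occurring in the Schur complement; everything else is the substitution $m \mapsto md$ together with the exponent $m(n-1)-k+1$, which collapses to $(m-1)(n-1)$ when $k=n$, recovering Theorem~\ref{thm:char}.
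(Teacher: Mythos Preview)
Your proposal is correct and follows essentially the same route as the paper's own proof: the same block form for $L(\mathcal{C}(n,k,m))$, the same Schur-complement reduction with respect to $I_m\otimes((x-1)I_n-L(K_n))$, the same Kronecker contraction $(I_k\otimes 1_d^T)((x-1)I_n-J_n)(I_k\otimes 1_d)=d(x-1)I_k-d^2J_k$, and the same application of Corollary~\ref{coro:detinv}. Your dimension $k+mn$ for the total matrix is in fact the right one (the paper writes $n(m+1)$, which only agrees when $k=n$).
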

\begin{proof}
  The Laplacian matrix of $\mathcal{C}(n,k,m)$ is 
  \begin{equation*}
      L(\mathcal{C}(n,k,m))=\begin{pmatrix}
       mdI_k+L(\overline{K}_k) & -1_m^T\otimes (I_k \otimes 1_d^T)\\
       -1_m\otimes (I_k \otimes 1_d) & I_m \otimes(I_n+L(K_n))
      \end{pmatrix}.
  \end{equation*}
  Then the characteristic polynomial of $L(\mathcal{C}(n,k,m))$ is
  \begin{equation*}
      \mu(L(\mathcal{C}(n,k,m)),x)=\det\big(xI_{n(m+1)}-L(\mathcal{C}(n,k,m))\big).
  \end{equation*}
  Consider $xI_{n(m+1)}-L(\mathcal{C}(n,k,m))$ as a matrix over the field of rational functions $\mathbb{C}(x)$. Then
  \begin{align*}
      &~~~\det\big(xI_{n(m+1)}-L(\mathcal{C}(n,k,m))\big)\\
      &=\det\begin{pmatrix}
       (x-md)I_k & 1_m^T\otimes (I_k \otimes 1_d^T)\\
       1_m\otimes (I_k \otimes 1_d) & I_m \otimes((x-1)I_n-L(K_n))
      \end{pmatrix}\\
      &=\det\big(I_m \otimes((x-1)I_n-L(K_n))\big) \det\big((x-md)I_k\\
      &~~-(1_m^T\otimes (I_k \otimes 1_d^T))(I_m \otimes((x-1)I_n-L(K_n)))^{-1}(1_m\otimes (I_k \otimes 1_d) \big)\\
      &=\det\big((x-1)I_n-L(K_n)\big)^m\det\big((x-md)I_k\\
      &~~-m\big(I_k \otimes 1_d^T)((x-1)I_n-L(K_n))^{-1}(I_k \otimes 1_d)\big).
  \end{align*}
  It is easily check that 
  \begin{equation*}
      \det\big((x-1)I_n-L(K_n)\big)^m=(x-1)^m(x-(n+1))^{m(n-1)}.
  \end{equation*}
  Now, we compute $\det\big((x-md)I_k-m\big(I_k \otimes 1_d^T)((x-1)I_n-L(K_n))^{-1}(I_k \otimes 1_d)\big).$
  By Corollary \ref{coro:detinv} (b), we have 
  \begin{equation*}
      \big((x-1)I_n-L(K_n)\big)^{-1}=\frac{1}{(x-1)(x-(n+1))}((x-1)I_n-J_n).
  \end{equation*}
  Note that the matrix $(x-1)I_n-J_n$ can be written in the Kronecker product form $(x-1)I_k\otimes I_d -J_k \otimes J_d$. It follows that 
  \begin{align*}
      &~(I_k \otimes 1_d^T)((x-1)I_n-L(K_n))^{-1}(I_k \otimes 1_d)\\
      =&~(x-1)^{-1}(x-(n+1))^{-1}(I_k \otimes 1_d^T)((x-1)I_k\otimes I_d -J_k \otimes J_d)(I_k \otimes 1_d)\\
      =&~(x-1)^{-1}(x-(n+1))^{-1}(d(x-1)I_k-d^2J_k).
  \end{align*}
  Then we have
  \begin{align*}
      &\det\big((x-md)I_k-m \big(I_k \otimes 1_d^T)((x-1)I_n+L(K_n))^{-1}(I_k \otimes 1_d)\big)\big)\\
      =&\frac{\det\big((x-md)(x-1)(x-(n+1))I_k- md(x-1)I_k+md^2J_k \big)}{(x-1)^k(x-(n+1))^k}.\\
  \end{align*}
  By Corollary \ref{coro:detinv} (a), we obtain
  \begin{align*}
      &\det\big((x-md)(x-1)(x-(n+1))I_k- md(x-1)I_k+md^2J_k \big)\\
      =&\det\big( (x^3-(md+n+2)x^2+(md+1)(n+1)x-mdn)I_k+md^2J_k \big)\\
      =&~x(x-(md+1))(x-(n+1))(x-1)^{k-1}(x^2-(md+n+1)x+mdn)^{k-1}.
  \end{align*}
  Hence the determinant of $(x-md)I_k-m\big(I_k \otimes 1_d^T)((x-1)I_n-L(K_n))^{-1}(I_k \otimes 1_d)$ is
  \begin{equation*}
      \frac{x(x-(md+1))(x^2-(md+n+1)x+mdn)^{k-1}}{(x-1)(x-(n+1))^{k-1}}.
  \end{equation*}
  Thus the characteristic polynomial of $L(\mathcal{C}(n,k,m))$ is
  \begin{align*}
      x(x-1)^{m-1}(x-(n+1))^{m(n-1)-k+1}(x-(md+1))(x^2-(md+n+1)x+mdn)^{k-1}.
  \end{align*}
\end{proof}

The next two corollaries tell us about the relation between Laplacian integral graphs $\mathcal{C}(n,n,m)$ and $\mathcal{C}(n,k,m')$ for some positive integers $n$, $m$, $m'$ and $k$ with $k\,|\,n$.

\begin{corollary}\label{coro:n-k}
  Suppose that $\mathcal{C}(n,n,m)$ is Laplacian integral for some positive integers $n$ and $m$.
  Let $d$ be a divisor of $n$.
  If $m$ is divisible by $d$, then $\mathcal{C}(n,\frac{n}{d},\frac{m}{d})$ is Laplacian integral.
\end{corollary}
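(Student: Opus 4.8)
The plan is to obtain the characteristic polynomial of $L\bigl(\mathcal{C}(n,\tfrac{n}{d},\tfrac{m}{d})\bigr)$ straight from Theorem~\ref{thm:char-k} and then compare it, factor by factor, with the characteristic polynomial of $L(\mathcal{C}(n,m))$ supplied by Theorem~\ref{thm:char}. Recall that $\mathcal{C}(n,n,m)=\overline{K}_n\vee_n mK_n=\mathcal{C}(n,m)$, so the hypothesis ``$\mathcal{C}(n,n,m)$ is Laplacian integral'' is just ``$\mathcal{C}(n,m)$ is Laplacian integral''. The point of the proof is that the only potentially non-integral factor is a single quadratic, and that quadratic is literally the same for both graphs.

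First I would set $k'=\tfrac{n}{d}$ and $m'=\tfrac{m}{d}$, noting that both are positive integers (since $d\mid n$ and $d\mid m$, with $m\geq d$), and invoke Theorem~\ref{thm:char-k} for the triple $(n,k',m')$; the quantity playing the role of ``$d$'' there is $n/k'=d$, so
\begin{equation*}
  \mu\bigl(L(\mathcal{C}(n,\tfrac{n}{d},\tfrac{m}{d})),x\bigr)=x(x-1)^{m'-1}(x-(n+1))^{m'(n-1)-k'+1}(x-(m'd+1))\bigl(x^2-(m'd+n+1)x+m'dn\bigr)^{k'-1}.
\end{equation*}
The key step is the substitution $m'd=m$: after it, every factor is either a linear polynomial with integer root ($0$, $1$, $n+1$, or $m+1$) or a power of the quadratic $q(x)=x^2-(m+n+1)x+mn$. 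This $q(x)$ is exactly the quadratic factor occurring in $\mu(L(\mathcal{C}(n,m)),x)$ in Theorem~\ref{thm:char}; since all other factors there already have integer roots, the Laplacian integrality of $\mathcal{C}(n,m)$ forces $q(x)$ to split over the integers. Hence $\mu\bigl(L(\mathcal{C}(n,\tfrac{n}{d},\tfrac{m}{d})),x\bigr)$ splits over the integers as well, which is the claim.

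There is no serious obstacle; the argument is entirely a matter of reading off and matching polynomials. The only places demanding a bit of care are the bookkeeping of which symbol plays the role of ``$k$'', ``$m$'', and ``$d$'' when Theorem~\ref{thm:char-k} is applied to $\mathcal{C}(n,\tfrac{n}{d},\tfrac{m}{d})$, the check that the exponents $m'-1$, $m'(n-1)-k'+1$, $k'-1$ are nonnegative (immediate from $m'\geq1$ and $k'\geq1$), and the trivial degenerate case $n=1$, where $q(x)$ is absent and the conclusion is automatic.
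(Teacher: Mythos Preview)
Your proof is correct and follows essentially the same route as the paper: apply Theorem~\ref{thm:char-k} to $\mathcal{C}(n,\tfrac{n}{d},\tfrac{m}{d})$, observe that the only non-linear factor is the quadratic $x^2-(m+n+1)x+mn$ (after the substitution $m'd=m$), and note that this is precisely the quadratic whose integrality is guaranteed by the hypothesis on $\mathcal{C}(n,n,m)$. Your version is slightly more explicit about the bookkeeping (which symbol plays which role, nonnegativity of exponents, the $n=1$ edge case), but the argument is the same.
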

\begin{proof}
  Suppose that $\mathcal{C}(n,n,m)$ is Laplacaian integral for some positive integers $n$ and $m$.
  Then the polynomial $x^2-(m+n+1)x+mn$ in the characteristic polynomial of $\mathcal{C}(n,n,m)$ can be factored over the integers.
  Let $d$ be a divisor of $n$.
  By Theorem \ref{thm:char-k}, it is enough to show that the quadratic in the characteristic polynomial of $\mathcal{C}(n,\frac{n}{d},\frac{m}{d})$ has integral roots.
  Since the quadratic is
  \begin{equation*}
      x^2-\bigg(\frac{m}{d}dn+n+1\bigg)x+\frac{m}{d}d n =x^2-(m+n+1)x+mn,
  \end{equation*}
  the graph $\mathcal{C}(n,\frac{n}{d},\frac{m}{d})$ is Laplacian integral.
\end{proof}

\begin{corollary}\label{coro:k-n}
  Suppose that $\mathcal{C}(n,k,m)$ is Laplacian integral for some positive integers $n$, $m$ and $k$ with $k\,|\,n$. Let $d=n/k$. Then $\mathcal{C}(n,n,md)$ is Laplacian integral.
\end{corollary}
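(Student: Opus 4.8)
The plan is to compare the characteristic polynomials of $L(\mathcal{C}(n,k,m))$ and $L(\mathcal{C}(n,n,md))$, which are furnished by Theorems~\ref{thm:char-k} and~\ref{thm:char} respectively; recall that $\mathcal{C}(n,n,m')=\mathcal{C}(n,m')$, so Theorem~\ref{thm:char} does apply to the second graph. The whole statement will drop out once one sees that these two polynomials, apart from factors that are linear with integer roots, carry one and the same quadratic factor
\begin{equation*}
q(x)=x^2-(md+n+1)x+mdn .
\end{equation*}

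First I would read off from Theorem~\ref{thm:char-k}, applied with the divisor $k$ and $d=n/k$, that $\mu(L(\mathcal{C}(n,k,m)),x)$ is a product of powers of the linear polynomials $x$, $x-1$, $x-(n+1)$, $x-(md+1)$ together with $q(x)^{k-1}$. Since $L(\mathcal{C}(n,k,m))$ is an integer matrix, its characteristic polynomial is monic with integer coefficients, so being Laplacian integral is equivalent to that polynomial splitting into integer linear factors. As all the displayed linear factors already have integer roots, the hypothesis that $\mathcal{C}(n,k,m)$ is Laplacian integral forces the monic integer quadratic $q(x)$ to factor over $\mathbb{Z}$.

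Next I would apply Theorem~\ref{thm:char} with its parameter $m$ replaced by $md$, i.e.\ to the graph $\mathcal{C}(n,n,md)=\mathcal{C}(n,md)$: its characteristic polynomial is again a product of powers of $x$, $x-1$, $x-(n+1)$, $x-(md+1)$ together with $q(x)^{n-1}$, because the quadratic factor produced by the theorem is $x^2-(md+n+1)x+mdn=q(x)$. By the previous step $q$ splits over $\mathbb{Z}$, so every factor of $\mu(L(\mathcal{C}(n,n,md)),x)$ is linear over $\mathbb{Z}$, and hence $\mathcal{C}(n,n,md)$ is Laplacian integral.

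I do not expect a genuine obstacle: the substantive point is simply that the two characteristic polynomials share the quadratic factor $q(x)$, and everything else is careful bookkeeping when specializing Theorems~\ref{thm:char} and~\ref{thm:char-k} (in particular, checking that the middle parameter $n$ of $\mathcal{C}(n,n,md)$ makes the relevant product $md\cdot 1$ equal to $md$, so the two quadratics genuinely coincide). For thoroughness one should also dispose of the trivial degenerate cases directly (for instance $n=1$, where $\mathcal{C}(n,n,md)$ is a star and is Laplacian integral outright).
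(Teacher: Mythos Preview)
Your proposal is correct and follows precisely the paper's approach: the paper's proof is a two-line remark that the quadratic appearing in $\mu(L(\mathcal{C}(n,n,md)),x)$ is $x^2-(md+n+1)x+mdn$, the very same quadratic appearing in $\mu(L(\mathcal{C}(n,k,m)),x)$, so integrality of the former follows from that of the latter. Your write-up is simply a more explicit version of this, spelling out why the linear factors are harmless and why the shared quadratic is the only thing to check.
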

\begin{proof}
  The proof is similar that of Theorem \ref{coro:n-k}. Since the quadratic in the characteristic polynomial of $\mathcal{C}(n,n,md)$ is
  \begin{equation*}
      x^2-(md+n+1)x+mdn,
  \end{equation*}
  it is easy to see that $\mathcal{C}(n,n,md)$ is Laplacian integral.
\end{proof}

\section*{Declaration of Competing Interest}
There is no competing interest.

\end{document}